\documentclass[10pt,a4paper]{amsart}

\usepackage[latin1]{inputenc}
\usepackage[english]{babel}
\usepackage[T1]{fontenc}
\usepackage{amsmath}
\usepackage{indentfirst}
\usepackage{multirow}
\usepackage{color} 
\usepackage{array}
\usepackage{multicol}
\usepackage{hyperref}
\usepackage{amsfonts}
\usepackage{amsthm}
\usepackage{amssymb}
 
\newtheorem{teo}{Theorem}[section]
\newtheorem{lema}{Lemma}[section]

\newtheorem{prop}[teo]{Proposition}

\newtheorem{defi}{Definition}[section]

\newcommand{\ffrac}{\displaystyle\frac}

\def\s{\operatorname{\textbf{Sgn}}}
\def\R{\mathbb{R}}
\def\vizp{\mathcal{U}_p}
\def\l{\ell}

\DeclareMathOperator*{\mycup}{\cup}

\sloppy

\begin{document}

\title{A new proof of a theorem of Dutertre and Fukui on Morin singularities}

\author{Camila M. \textsc{Ruiz}}

\address{Universidade de São Paulo\\
Instituto de Ciências Matemática e de Computação \\
Avenida Trabalhador São-carlense, 400 - Centro\\
CEP: 13566-590 - São Carlos - SP, Brasil}
\email{cmruiz@icmc.usp.br}

\maketitle

\begin{abstract}
In \cite{Dutertrefukui}, N.Dutertre and T. Fukui used Viro's integral calculus to study the topology of stable maps $f:M\rightarrow N$ between two smooth manifolds $M$ and $N$. 
They also discussed several applications to Morin maps. 
In particular, in Theorem 6.2 \cite{Dutertrefukui}, they show an equality relating the Euler characteristic of a compact manifold $M$ and the Euler characteristic of the singular sets of a Morin map defined on $M$. 

In this paper we show how Morse theory for manifolds with boundary can be applied to the study of the singular sets of a Morin map in order to obtain a new proof of Dutertre-Fukui's Theorem when $N=\R^n$.
\end{abstract}


\section{Introduction}

Let $f:M\rightarrow\R^n$ be a $C^{\infty}$ map defined on a compact manifold of dimension $m$ with $m\geq n$ having only Morin singularities. In \cite{Fukuda}, T. Fukuda used Morse theory to study the topology of the singular sets of $f$ and their relation with the topology of $M$. The author has obtained a modulo 2 congruence formula associating the Euler characteristic of $M$ to the Euler characteristic of the singular sets $\overline{A_k(f)}$: \begin{equation}\label{Fukuda1}\chi(M)\equiv \displaystyle\sum_{k=1}^{n}\chi(\overline{A_k(f)}) \mod2,\end{equation} where $A_k(f)$ is the set of points $x$ in $M$ such that $f$ has a singular point of type $A_k$ at $x$. Furthermore, in the case that $f$ admits only fold singular points, that is, Morin singular points of type $A_1$, T. Fukuda proved an equality relating $\chi(M)$ to the singular set of $f$: \begin{equation}\label{Fukuda2}\chi(M)=\chi(A_1^+(f))-\chi(A_k^-(f)).\end{equation} Later O. Saeki \cite{Saeki} used Morse theory in the sense of Bott \cite{Bott} to extend T. Fukuda's formulas to the case that $f:M\rightarrow N$ is a Morin map with $\dim M\geq\dim N$.

Recently, N. Dutertre and T. Fukui \cite{Dutertrefukui} investigated how Viro's integral calculus can be applied to study the topology of stable maps $f:M\rightarrow N$ and established relations between the Euler characteristic of $M$ and $N$ and the singular sets of $f$. Applying the obtained results to the context of Morin maps and using the link between the Euler characteristic with closed support and the topological Euler characteristic, the authors could recover and improve several  well-known results, including some formulas obtained by O. Saeki in \cite{Saeki} and the formulas (\ref{Fukuda1}) and (\ref{Fukuda2}) obtained by T. Fukuda in \cite{Fukuda}. In particular, N. Dutertre and T. Fukui gave a characterization for the subsets $A_k^+(f)$ and $A_k^-(f)$ of $A_k(f)$ and proved the following theorem.

\begin{teo}\cite[Theorem 6.2]{Dutertrefukui}\label{DutertreFukuiRn}
Let $f:M\rightarrow N$ be a Morin map. Assume that $M$ is a $m$-dimensional compact manifold, $N$ is a $n$-dimensional connected manifold and $m-n$ is odd. Then, $$\chi(M)=\sum_{k: \text{ odd}}{\left[\chi(\overline{A_k^+(f)})-\chi(\overline{A_k^-(f)})\right]}.$$
\end{teo}
 
In this paper we present a new proof of Dutertre-Fukui's theorem when $N=\R^n$. Inspired by the papers of T. Fukuda and O. Saeki, we investigate how Morse theory for manifolds with boundary can be applied to study the singular sets of a Morin map in order to obtain the new proof. In addition to Morse theory for manifolds with boundary, the new proof version uses the local forms of Morin singularities and the characterization of $A_k^+(f)$ and $A_k^-(f)$ given in \cite{Dutertrefukui}. 

In Section \ref{lemmasFukuda}, we present lemmas proved by T. Fukuda in \cite{Fukuda} that will be used in the new proof of Dutertre-Fukui's Theorem. In Section \ref{DutertreFukuiTheorem}, we recall the main results of N. Dutertre and T. Fukui's paper. In Section \ref{MorseTheory} we briefly discuss results of Morse theory for manifolds with boundary that will be used in Sections \ref{correctpts} and \ref{newproof}. 

We consider generic projections $L_a:\R^n\rightarrow\R$ given by $L_a(x)=\sum_{i=1}^{n}a_ix_i$, such that if $f$ is a Morin map, then $L_a\circ f$ are Morse functions with good properties that satisfy lemmas by T. Fukuda presented in Section \ref{lemmasFukuda}. Then, in Section \ref{correctpts} we apply T. Fukuda's lemmas and Morse theory for manifolds with boundary to study the correct critical points of the restrictions $L_a\circ f|_{\overline{A_k(f)}}$ to the singular sets $\overline{A_k(f)}$ of $f$. Finally, we conclude this paper with the new proof of Dutertre-Fukui's Theorem in Section \ref{newproof}.

The author would like to express her sincere gratitude to Nicolas Dutertre and Nivaldo de Góes Grulha Júnior for fruitful discussions and valuable comments that resulted in this work.
The author was supported by CNPq, "Conselho Nacional de Desenvolvimento Científico e Tecnológico", Brazil (grants 143479/2011-3 and 209531/2014-2).

\section{T. Fukuda's lemmas}\label{lemmasFukuda}

Let $M$ be a compact manifold of dimension $m$, $N$ be a connected manifold of dimension $n$ with $m\geq n$ and $f:M\rightarrow N$ be a $C^{\infty}$ map. We say that $p\in M$ is a Morin singular point of type $A_k$ $(k=1, \ldots,n)$ of $f$ if there exists local coordinates $x=(x_1, \ldots, x_m)$ around $p$ and $y=(y_1, \ldots, y_n)$ around $f(p)$ such that $f$ may be written locally as: 
\begin{equation}\label{formalocalcap2}
\begin{array}{l}
 y_i\circ f=x_i, \text{ for } i\leq n-1;\\
 y_n\circ f=x_n^{k+1}+\displaystyle\sum_{i=1}^{k-1}x_ix_n^{k-i}+ x_{n+1}^2+\ldots+ x_{n+\lambda-1}^2-x_{n+\lambda}^2-\ldots-x_m^2.
\end{array}
\end{equation} We say that $f$ is a Morin map if $f$ admits only Morin singular points. We denote by $A_k(f)$ the set of Morin singular points of type $A_k$ of $f$ and by $\overline{A_k(f)}$ the topological closure of $A_k(f)$ and we set $A_0(f)$ the set of regular points of $f$. The properties presented in the following lemma are well known (see Lemma 2.2 in \cite{Fukuda}).

\begin{lema}\label{propriedadesak} Let $f:M\rightarrow\R^n$ be a Morin map as above. Then we have:
\begin{enumerate}
\item $A_k(f)$ and $\overline{A_k(f)}$ are submanifolds of $M$ having dimension $n-k$, for $k=1, \ldots, n$;
\item $\overline{A_k(f)}=\displaystyle\mycup\limits_{i\geq k}A_i(f)$, for $k=0, \ldots, n$.
\end{enumerate}
\end{lema}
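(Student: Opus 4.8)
The plan is to work directly from the local normal form in equation (\ref{formalocalcap2}) and extract the geometry of the singular sets from it. The two claims are local, so I would fix a Morin singular point $p \in A_k(f)$ and choose coordinates $x=(x_1,\dots,x_m)$ in which $f$ takes the stated form. The key observation is that the singular set of $f$ near $p$ is governed entirely by the critical points of the one-parameter family of functions $y_n \circ f$ in the ``vertical'' variables $x_n, x_{n+1}, \dots, x_m$, with $x_1,\dots,x_{n-1}$ playing the role of parameters. First I would compute the Jacobian of $f$ in these coordinates and identify precisely where it drops rank, which reduces to analysing where the gradient of $g(x) = y_n\circ f$ in the variables $x_n,\dots,x_m$ vanishes.

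\smallskip

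For part (1), I would show that $A_k(f)$ is cut out near $p$ by a clean system of equations. The quadratic variables $x_{n+1},\dots,x_m$ contribute a nondegenerate part, so $\partial g/\partial x_j = \pm 2 x_j = 0$ forces $x_{n+1} = \dots = x_m = 0$ on the whole singular locus. The remaining behaviour is dictated by the polynomial $\varphi(x_n) = x_n^{k+1} + \sum_{i=1}^{k-1} x_i x_n^{k-i}$ in the single variable $x_n$ with coefficients $x_1,\dots,x_{k-1}$; here I would recognise this as (a versal-type unfolding of) the $A_k$ singularity of $x_n^{k+1}$, and use the standard fact that a point has a type $A_j$ contact exactly when $\varphi$ has a root of multiplicity $j+1$, i.e. when $\varphi = \varphi' = \dots = \varphi^{(j)} = 0$ and $\varphi^{(j+1)} \neq 0$ at that point. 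Imposing the conditions for type exactly $A_k$ — namely that $x_n$ be a root of multiplicity $k+1$ of $\varphi$, which by direct computation forces $x_n = x_1 = \dots = x_{k-1} = 0$ — I would obtain that $A_k(f)$ is locally the coordinate subspace $\{x_1 = \dots = x_{k-1} = x_n = x_{n+1} = \dots = x_m = 0\}$, parametrised freely by $x_k,\dots,x_{n-1}$. That is $n-k$ free coordinates, giving $\dim A_k(f) = n-k$ and exhibiting it as a smooth submanifold.

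\smallskip

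For part (2), I would argue that the closure is obtained by relaxing the root-multiplicity conditions: a nearby point lies in $A_i(f)$ for some $i \ge k$ precisely when $\varphi$ has a root of multiplicity at least $k+1$. Since the condition ``$\varphi$ has a root of multiplicity $\geq k+1$'' is closed and is exactly $\bigcup_{i \geq k} A_i(f)$, I would conclude $\overline{A_k(f)} = \bigcup_{i\geq k} A_i(f)$, and then verify via the same vanishing computation that this closed set coincides locally with the smooth coordinate subspace above, hence is itself a submanifold of dimension $n-k$. The inclusion $A_i(f) \subset \overline{A_{k}(f)}$ for $i>k$ would be checked by deforming the coefficients $x_1,\dots,x_{k-1}$ to split the multiplicity-$(i+1)$ root of an $A_i$ point into a nearby multiplicity-$(k+1)$ root, exhibiting $A_k$ points arbitrarily close to any $A_i$ point.

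\smallskip

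The main obstacle I anticipate is the bookkeeping connecting the \emph{algebraic} root-multiplicity stratification of the unfolding $\varphi(x_n)$ to the \emph{geometric} singularity type $A_j$ of $f$, and in particular verifying cleanly that imposing a root of multiplicity exactly $k+1$ kills precisely the $k$ coordinates $x_1,\dots,x_{k-1},x_n$ and no others. This requires carefully differentiating $\varphi$ up to order $k$ in $x_n$ and solving the resulting triangular system; the triangular structure is what ultimately guarantees smoothness and the correct codimension, so I would make that linear-algebra step explicit rather than appeal to it implicitly. Everything else — the elimination of the quadratic variables and the closedness argument for part (2) — should follow routinely once the unfolding picture is set up.
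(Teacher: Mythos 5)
The paper offers no proof of this lemma at all: it is quoted as well known, with a pointer to Lemma~2.2 of \cite{Fukuda}, and your plan is essentially a reconstruction of that standard argument (normal form, Morin's derivative characterization of the strata, triangular solve) --- the same local computations the paper itself carries out later in Sections~\ref{correctpts} and~\ref{newproof}. However, there is one genuine error in your plan. You characterize type $A_j$ at a nearby point by ``$\varphi$ has a root of multiplicity $j+1$, i.e.\ $\varphi=\varphi'=\dots=\varphi^{(j)}=0$ and $\varphi^{(j+1)}\neq 0$'', thus imposing the vanishing of $\varphi$ itself. The correct characterization (Morin \cite{Morin2}, \cite[p.~342]{Fukuda}, reproduced in this paper in the proof of Lemma~\ref{lemadaperturbacao}) involves only the derivatives of orders $1,\dots,j$ with respect to $x_n$, together with the conditions $x_{n+1}=\dots=x_m=0$: the singularity type is a property of the map germ at $q$ and is insensitive to the value of $y_n\circ f$, so what must have a multiplicity-$(j+1)$ zero is $\varphi-\varphi(x_n(q))$, not $\varphi$. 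In your part (1) this slip is invisible: the derivative conditions alone already force $x_n=x_1=\dots=x_{k-1}=0$, and there $\varphi$ vanishes automatically (it has no constant term), so you still obtain the correct $(n-k)$-dimensional coordinate subspace.

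In part (2) the slip is fatal as written, because the closure must be analyzed in charts around points of \emph{higher} type, and there the extra equation $\varphi=0$ cuts the dimension by one. Take $k=1$ near a point of $A_2(f)$, where $\varphi(x_n)=x_n^3+x_1x_n$: the set $\bigcup_{i\geq 1}A_i(f)$ is locally $\{\varphi'=3x_n^2+x_1=0,\ x_{n+1}=\dots=x_m=0\}$, a smooth graph of dimension $n-1$, while your set $\{\varphi=\varphi'=0\}$ additionally forces $-2x_n^3=0$, i.e.\ $x_n=x_1=0$, of dimension $n-2$; indeed at the nearby fold points one has $\varphi=-2x_n^3\neq 0$. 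So the closed condition ``$\varphi$ has a root of multiplicity $\geq k+1$'' is \emph{not} equal to $\bigcup_{i\geq k}A_i(f)$, and both the identification of the closure and the dimension count for $\overline{A_k(f)}$ collapse. The repair is a one-condition correction: require only that $x_n$ be a root of $\varphi'$ of multiplicity $\geq k$, i.e.\ $\varphi^{(1)}=\dots=\varphi^{(k)}=0$. In the chart at a point of $A_j(f)$, $j\geq k$, these $k$ equations form the triangular system you anticipate, solving $x_{j-1},\dots,x_{j-k}$ as smooth functions of the remaining variables (exactly the computation the paper performs in (\ref{xrs})--(\ref{xrsak+1})); note that the resulting set is a \emph{graph}, e.g.\ $x_r=c_rx_n^{r+1}$, not the coordinate subspace of your $A_k$ chart, so ``the same vanishing computation'' must be rephrased accordingly. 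Density of $A_k(f)$ in this set then follows either from your root-splitting deformation (where the available coefficients at an $A_i$ point are $x_1,\dots,x_{i-1}$, not $x_1,\dots,x_{k-1}$) or, more cheaply, because the sublocus where additionally $\varphi^{(k+1)}=0$ is a graph of dimension $n-k-1$, hence nowhere dense. With that correction your plan goes through and coincides with the cited proof.
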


Let $L\in\R P^{n-1}$ be a straight line in $\R^n$ and let $\pi_L:\R^n\rightarrow L$ be the orthogonal projection to $L$. In \cite{Fukuda}, T. Fukuda apply Morse theory and well known properties of singular sets $A_k(f)$ of a Morin map $f:M\rightarrow\R^n$ to study the critical points of mappings $\pi_L\circ f:M\rightarrow L$ and their restrictions to the singular sets $\pi_L\circ f|_{A_k(f)}$ and $\pi_L\circ f|_{\overline{A_k(f)}}$. In \cite{Saeki}, O. Saeki has extended the results obtained by T. Fukuda to the case of a Morin map $f:M\rightarrow N$, where $N$ is a manifold with $\dim M\geq\dim N$. 

In this work we will consider linear maps $L_a:\R^n\rightarrow\R$ defined by $L_a(x)=\sum_{i=1}^{n}a_ix_i$, for $a=(a_1, \ldots, a_n)\in\R^n\setminus\{\vec{0}\}$. These mappings are more general than the orthogonal projections $\pi_L:\R^n\rightarrow L$ since, up to isomorphism, all $\pi_L$ can be written in the form $L_a(x)=\sum_{i=1}^{n}a_ix_i$ for some $a\in\R^n\setminus\{\vec{0}\}$ with $\left\|a\right\|=1$. Moreover, $L_a\circ f$ and its restrictions $L_a\circ f|_{ A_k(f)}$ and $L_a\circ f|_{ \overline{A_k(f)}}$ also satisfy the following lemmas presented by T. Fukuda to study $\pi_L\circ f$. For details, see \cite{Fukuda} and \cite{Ruiz}.\\

Let $f:M^m\rightarrow\R^n$ be a $C^{\infty}$ Morin map with $m\geq n$. The following lemma holds for every $a\in\R^n\setminus\{\vec{0}\}$.

\begin{lema}\label{lemaseparado} For a point $p\in A_{k+1}(f)$, $p$ is a critical point of $L_a\circ f|_{A_{k+1}(f)}$ if and only if $p$ is a critical point of $L_a\circ f|_{\overline{A_k(f)}}$.
\end{lema}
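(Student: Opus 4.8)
The plan is to reduce the statement to a comparison of tangent spaces inside $\ker d(L_a\circ f)_p$, and then to combine the chain rule with the local normal form (\ref{formalocalcap2}). Write $g=L_a\circ f$. A point $p$ is a critical point of the restriction $g|_X$ to a submanifold $X\ni p$ exactly when $dg_p$ annihilates $T_pX$, that is $dg_p|_{T_pX}=0$. By Lemma \ref{propriedadesak}(2) we have $A_{k+1}(f)\subseteq\overline{A_k(f)}$, hence $T_pA_{k+1}(f)\subseteq T_p\overline{A_k(f)}$. Therefore one implication is immediate: if $dg_p|_{T_p\overline{A_k(f)}}=0$, then a fortiori $dg_p|_{T_pA_{k+1}(f)}=0$, so criticality on $\overline{A_k(f)}$ forces criticality on $A_{k+1}(f)$.

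For the converse the key observation is the chain rule identity $dg_p=(dL_a)_{f(p)}\circ df_p$, which gives $\ker df_p\subseteq\ker dg_p$ for every $a\in\R^n\setminus\{\vec0\}$. Thus it suffices to show that $T_p\overline{A_k(f)}$ is spanned by $T_pA_{k+1}(f)$ together with vectors lying in $\ker df_p$. Granting this, any $v\in T_p\overline{A_k(f)}$ decomposes as a vector tangent to $A_{k+1}(f)$ plus a kernel vector, so $dg_p(v)=0$ as soon as $dg_p$ kills $T_pA_{k+1}(f)$. Both criticality conditions then become equivalent to the single condition $dg_p|_{T_pA_{k+1}(f)}=0$, which proves the lemma.

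To establish this last claim I would work in the coordinates of (\ref{formalocalcap2}) centred at $p\in A_{k+1}(f)$, writing $\phi=y_n\circ f$ and $\psi(t)=\phi(x_1,\dots,x_{n-1},t,0,\dots,0)$. The singular sets are cut out by the vanishing of the successive $t$-derivatives of $\psi$ evaluated at $t=x_n$: one has $\overline{A_j(f)}=\{x_{n+1}=\dots=x_m=0,\ \psi'=\dots=\psi^{(j)}=0\}$. Differentiating the functions $\psi^{(i)}$ at $p$, the linear part of $\psi^{(i)}$ is (up to a nonzero constant) $x_{k+1-i}$ for $1\le i\le k$, while that of $\psi^{(k+1)}$ is $x_n$; the resulting differentials $dx_k,\dots,dx_1,dx_n$ are independent, which both recovers the dimension count of Lemma \ref{propriedadesak}(1) and yields $T_p\overline{A_k(f)}=\langle\partial/\partial x_{k+1},\dots,\partial/\partial x_n\rangle$ and $T_pA_{k+1}(f)=\langle\partial/\partial x_{k+1},\dots,\partial/\partial x_{n-1}\rangle$. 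Hence the single complementary direction is $\partial/\partial x_n$, and from (\ref{formalocalcap2}) one checks directly that $df_p(\partial/\partial x_n)=0$, so $\partial/\partial x_n\in\ker df_p$, exactly as required. When $k=0$ the same computation applies with $\overline{A_0(f)}=M$, the complementary directions being $\partial/\partial x_n,\dots,\partial/\partial x_m$, all of which lie in $\ker df_p$.

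The only delicate point is this local computation: reading off the defining equations of $\overline{A_k(f)}$ and $A_{k+1}(f)$ from the $t$-derivatives of $\psi$ and checking that the new tangent direction of $\overline{A_k(f)}$ at an $A_{k+1}$ point is precisely the kernel direction $\partial/\partial x_n$. Geometrically this is the classical fact that at an $A_{k+1}$ singularity the kernel line of $df$ is tangent to $\overline{A_k(f)}$ but transverse to $A_{k+1}(f)$; once it is in hand, the equivalence of the two criticality conditions follows formally from the chain rule, uniformly in $a$.
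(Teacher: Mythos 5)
Your proof is correct and follows essentially the same route as the paper's (the paper states Lemma \ref{lemaseparado} without proof, deferring to \cite{Fukuda} and \cite{Ruiz}, where the classical argument is exactly this one): using Morin's derivative characterization of the strata in the normal form (\ref{formalocalcap2}) at $p\in A_{k+1}(f)$ to see that $T_p\overline{A_k(f)}$ is spanned by $T_pA_{k+1}(f)$ together with the direction $\partial/\partial x_n\in\ker df_p$, which $d(L_a\circ f)_p=(dL_a)_{f(p)}\circ df_p$ annihilates for every $a\in\R^n\setminus\{\vec{0}\}$. Your local computation (linear part $j!\,x_{k+1-j}$ for $\psi^{(j)}$, $1\le j\le k$, and $(k+2)!\,x_n$ for $\psi^{(k+1)}$, plus the separate treatment of $k=0$ where the complementary kernel directions are $\partial/\partial x_n,\dots,\partial/\partial x_m$) is precisely the machinery the paper itself replays in Sections \ref{correctpts} and \ref{newproof}, e.g.\ in the descriptions of $A_k(f)$ and $\overline{A_k(f)}$ near an $A_{k+1}$ point and the relations $u_r=c_ru_n^{r+1}$.
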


In the following, a property is said to hold for almost every $a\in\R^n\setminus\{\vec{0}\}$ if the set of $a$ for which it does not hold is of Lebesgue measure zero in $\in\R^n\setminus\{\vec{0}\}$. Moreover, we denote by $C(g)$ the set of critical points of a map $g:M\rightarrow\R$.


\begin{lema} \label{Laproperties} For almost every $a\in\R^n\setminus\{\vec{0}\}$, the linear mapping $L_a:\R^n\rightarrow\R$ satisfies the properties:
\begin{enumerate}
\item The restricted mappings $L_a\circ f|_{A_{k}(f)}$ are Morse functions, for $k=1,\ldots,n$;
\item $C(L_a\circ f|_{\overline{A_k(f)}})\cap \overline{A_{k+2}(f)}=\emptyset$, $k=0,\ldots,n-2$;
\item For a point $q\in A_{k+1}(f)$, $q$ is a non-degenerated critical point of $L_a\circ f|_{A_{k+1}(f)}$ if, and only if, $q$ is a non-degenerated critical point of $L_a\circ f|_{\overline{A_k(f)}}$;
\item Every point of $A_n(f)$ is a non-degenerated critical point of $L_a\circ f|_{\overline{A_{n-1}(f)}}$;
\item $L_a\circ f$ and $L_a\circ f|_{\overline{A_{k}(f)}}$ are Morse functions, for $k=1,\ldots,n$.
\end{enumerate}
\end{lema}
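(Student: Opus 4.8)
The five statements are all genericity assertions: each fails only on a set of $a\in\R^n\setminus\{\vec{0}\}$ of Lebesgue measure zero, and since the index $k$ ranges over a finite set, it suffices to establish each one separately and then intersect the resulting full-measure sets. The common engine is a transversality (Sard-type) argument applied to suitable conormal, or generalized Gauss, maps, in which the local normal form (\ref{formalocalcap2}) is used to read off the geometry of the strata $A_k(f)$ and of their closures.

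\textbf{Plan for (1).} Using (\ref{formalocalcap2}) one checks that in an $A_k$-chart the stratum is $A_k(f)=\{x_1=\dots=x_{k-1}=0,\ x_n=0,\ x_{n+1}=\dots=x_m=0\}$ and that $f|_{A_k(f)}$ is there a linear embedding; hence $f|_{A_k(f)}\colon A_k(f)\to\R^n$ is an immersion of an $(n-k)$-manifold. For an immersion, $p$ is a critical point of $L_a\circ f|_{A_k(f)}$ exactly when $a\perp df_p(T_pA_k(f))$, so I would form the conormal bundle $\nu_k=\{(p,a): a\perp df_p(T_pA_k(f))\}$, a smooth $n$-manifold, and consider the projection $\pi_k\colon\nu_k\to\R^n$. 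By Sard's theorem almost every $a$ is a regular value of $\pi_k$, and the standard identification ``$a$ regular value of $\pi_k$ $\Longleftrightarrow$ every critical point of $L_a\circ f|_{A_k(f)}$ is non-degenerate'' yields (1). Intersecting over $k=1,\dots,n$ preserves full measure.

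\textbf{Plan for (2)--(5).} The closures $\overline{A_k(f)}$ are smooth $(n-k)$-manifolds by Lemma \ref{propriedadesak}, but $f|_{\overline{A_k(f)}}$ is no longer an immersion, which is the source of the remaining difficulty. For (2), I would use $\overline{A_{k+2}(f)}=\mycup_{i\ge k+2}A_i(f)$ and, for $p\in A_i(f)$, compute from (\ref{formalocalcap2}) the rank $r$ of $df_p$ restricted to $T_p\overline{A_k(f)}$: the degenerate direction $\partial/\partial x_n$ is tangent to $\overline{A_k(f)}$ and is annihilated by $df_p$, while the remaining tangent directions map to independent horizontal vectors, so $r=n-k-1$. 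Then the conormal set $W_i=\{(p,a): p\in A_i(f),\ a\perp df_p(T_p\overline{A_k(f)})\}$ has dimension $(n-i)+(n-r)=n-i+k+1\le n-1$ for $i\ge k+2$, so its image under projection to $\R^n$ is null; discarding these finitely many null sets gives (2). For (3) and (4) I would work in the normal form near a point $q\in A_{k+1}(f)$ (respectively $q\in A_n(f)$), choose coordinates on $\overline{A_k(f)}$ (respectively on the curve $\overline{A_{n-1}(f)}$) adapted to the substratum, and compute the Hessian of $L_a\circ f|_{\overline{A_k(f)}}$ in block form; using Lemma \ref{lemaseparado} to match critical points, one checks that the block transverse to $A_{k+1}(f)$ decouples and is non-degenerate for generic $a$, so that non-degeneracy on $\overline{A_k(f)}$ is equivalent to non-degeneracy on $A_{k+1}(f)$. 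Finally (5) is assembled: by (1) the critical points in the open stratum $A_k(f)$ are non-degenerate; by Lemma \ref{lemaseparado} and (3) the critical points lying in $A_{k+1}(f)$ are exactly those of $L_a\circ f|_{A_{k+1}(f)}$ and are non-degenerate; by (2) there are none in $\overline{A_{k+2}(f)}$; hence $L_a\circ f|_{\overline{A_k(f)}}$ is a Morse function (the case $k=n-1$ relying on (4) for the isolated $A_n(f)$ points). That $L_a\circ f$ itself is Morse for almost every $a$ follows similarly: by the corank-one normal form its critical points lie in $\overline{A_1(f)}$, by (2) with $k=0$ generically none lie in $\overline{A_2(f)}$, and the conormal argument over the fold stratum $A_1(f)$ gives their non-degeneracy for almost every $a$.

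\textbf{Main obstacle.} The genericity in (1) is routine once the immersion property is in hand. The real work lies in (2)--(4), where the failure of $f|_{\overline{A_k(f)}}$ to be an immersion forces one to control, via the normal form (\ref{formalocalcap2}), both the rank drop of $df$ along $\overline{A_k(f)}$ (for the dimension count in (2)) and the precise block structure of the Hessian at the deeper singular points (for the non-degeneracy equivalences in (3) and (4)). I expect the Hessian computation underlying (3)--(4) to be the most delicate step.
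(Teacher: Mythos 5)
Your proposal cannot be compared line-by-line with a proof in the paper, because the paper does not prove this lemma at all: it is quoted from T.~Fukuda's work, and the extension from orthogonal projections $\pi_L$ to the linear forms $L_a$ is deferred to \cite{Fukuda} and \cite{Ruiz}. Measured against that cited argument and against the computations the paper does carry out (Sections \ref{correctpts} and \ref{newproof}), your plan is a correct reconstruction of the standard route. For (1), the immersion property of $f|_{A_k(f)}$ read off from (\ref{formalocalcap2}) plus the conormal-bundle/Sard argument is exactly Milnor's proof that almost every linear form is Morse on a submanifold. For (2), your rank computation is right: at $p\in A_i(f)$, $i\geq k+2$, the tangent space $T_p\overline{A_k(f)}$ is spanned by coordinate directions of which only $\partial/\partial x_n$ is killed by $df_p$, so $r=n-k-1$ is constant along each stratum, the incidence set $W_i$ is a manifold of dimension $n-i+k+1\leq n-1$, and its projection to the $a$-space is null; this matches the parametrizations $x_r=c_rx_n^{r+1}+\cdots$ that the paper itself derives.

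On (3)--(4), where your sketch is thinnest, one refinement closes it completely: the decoupling you predict is exact, not just generic. On $\overline{A_k(f)}$ near $q\in A_{k+1}(f)$, the solved coordinates $x_1,\ldots,x_k$ and $y_n\circ f$ are functions of $x_n$ alone vanishing to order $\geq 2$, so at a critical point all mixed entries $\partial^2(L_a\circ f)/\partial x_s\partial x_n$ vanish and the Hessian of $L_a\circ f|_{\overline{A_k(f)}}$ is block diagonal, with transverse entry $2c_1\sum_i a_i\,\frac{\partial\psi_i}{\partial y_1}(g(q))$, $c_1\neq0$. Moreover the nonvanishing of this entry needs no separate Sard argument: it is equivalent to $q\notin C(L_a\circ f|_{\overline{A_{k-1}(f)}})$, i.e.\ it follows from your item (2) applied one level down (and for $k=0$ it is automatic, since $a\neq\vec{0}$ cannot annihilate all of $\R^n$). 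This is precisely the mechanism the paper uses later via $\eta(p)\neq0$, $\xi(p)\neq0$ and the block matrix (\ref{matrix}) at the $A_{k+2}$ and $A_n$ levels. With that observation your assembly of (5) goes through, so I see no gap --- only deferred computations, correctly identified, which are carried out in the cited sources and partially reproduced in the paper itself.
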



Suppose that $p\in A_1(f)$, then $y_n\circ f$ in (\ref{formalocalcap2}) has the form $$y_n\circ f=x_n^{2}+\ldots+ x_{n+\lambda-1}^2-x_{n+\lambda}^2-\ldots-x_m^2.$$ If $m-n$ is odd, then $\lambda\equiv m-n-\lambda+1 \mod2$ and the parity of the Morse index of the Morse function $y_n\circ f$ does not depend on the choice of the coordinates. Hence, the sets $$\begin{array}{c} A_{1}^{+}(f) = \{p\in M: p\in A_{k}(f) \text{ with } \lambda \equiv 0 \mod 2\};\\
 A_{1}^{-}(f) = \{p\in M: p\in A_{k}(f) \text{ with } \lambda \equiv 1 \mod 2\};
\end{array}$$ are well-defined.

\begin{lema}\label{indexlemma}
Suppose that $m-n+1$ is even. Let $q\in A_1(f)$ be a critical point of $L_a\circ f$, then  $$\begin{array}{llll}
Ind(L_a\circ f,q)\equiv & Ind(L_a\circ f|_{A_1^+(f)},q) & \mod2, & \text{ if } q\in A_1^+(f);\\ 
Ind(L_a\circ f,q)\equiv & 1 + Ind(L_a\circ f|_{A_1^-(f)},q) & \mod2, & \text{ if } q\in A_1^-(f); 
\end{array}$$ where $Ind(g,x)$ denotes the Morse index of a critical point $x$ of a Morse function $g$.
\end{lema}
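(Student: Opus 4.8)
The plan is to work entirely in the local normal form (\ref{formalocalcap2}) with $k=1$ centered at $q$, and to show that the Hessian of $g:=L_a\circ f$ at $q$ is block-diagonal with respect to the splitting $T_qM = T_qA_1(f)\oplus\nu_q$, where $\nu_q$ is spanned by the transverse coordinate directions. The Morse index then adds over the two blocks, and the whole statement reduces to a parity count on the transverse block.

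First I would set up coordinates. Taking $p=q$ in (\ref{formalocalcap2}) with $k=1$, the map reads $f=(x_1,\dots,x_{n-1},u)$ with $u=x_n^2+\dots+x_{n+\lambda-1}^2-x_{n+\lambda}^2-\dots-x_m^2$, and a direct rank computation shows that locally $A_1(f)=\{x_n=\dots=x_m=0\}$, so that $T_qA_1(f)=\operatorname{span}(\partial_{x_1},\dots,\partial_{x_{n-1}})$ and the transverse directions are $\partial_{x_n},\dots,\partial_{x_m}$. Writing $L_a$ in the adapted target coordinates $y$ as $\tilde L(y_1,\dots,y_n)$ (a generally nonlinear function, with $c:=\partial\tilde L/\partial y_n(q)$), we have $g=\tilde L(x_1,\dots,x_{n-1},u)$. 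Since $q$ is a critical point of $g$ lying in $A_1(f)$, it is automatically a critical point of the restriction $g|_{A_1(f)}$; and since $g$ is a Morse function by Lemma \ref{Laproperties}(5), its Hessian at $q$ is nondegenerate, which forces $c\neq 0$.

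Next I would compute the Hessian of $g$ at $q$, and this is the step I expect to be the heart of the argument. The crucial point is that the mixed tangential--transverse second derivatives vanish at $q$: because $u$ does not involve $x_1,\dots,x_{n-1}$ when $k=1$, one gets $\partial^2 g/\partial x_i\partial x_j=\tilde L_{y_iy_n}\,u_{x_j}$ for $i\le n-1<n\le j$, and this is zero at $q$ since $u_{x_j}(q)=0$. Hence the Hessian is block-diagonal. The tangential block is exactly the Hessian of $g|_{A_1(f)}=\tilde L(x_1,\dots,x_{n-1},0)$, so it contributes $Ind(g|_{A_1(f)},q)$; the transverse block is diagonal with entries $2c$ (with multiplicity $\lambda$) and $-2c$ (with multiplicity $m-n-\lambda+1$), so its index equals either $m-n-\lambda+1$ or $\lambda$ according to the sign of $c$.

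Finally I would invoke the parity hypothesis. Since $m-n+1$ is even and $\lambda+(m-n-\lambda+1)=m-n+1$, the two possible values of the transverse index have the same parity, namely $\equiv\lambda\pmod 2$, so the sign of $c$ is irrelevant modulo $2$ and $Ind(g,q)\equiv Ind(g|_{A_1(f)},q)+\lambda\pmod 2$. Because the parity of $\lambda$ is locally constant, $A_1^{\pm}(f)$ is open and closed in $A_1(f)$, so near $q$ the restriction to $A_1^{\pm}(f)$ agrees with the restriction to $A_1(f)$. The definitions $A_1^{+}(f)=\{\lambda\equiv 0\}$ and $A_1^{-}(f)=\{\lambda\equiv 1\}$ then yield precisely the two claimed congruences.
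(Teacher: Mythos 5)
Your proof is correct and follows essentially the same route as the source: the paper states this lemma without proof, citing Fukuda \cite{Fukuda} (with details in \cite{Ruiz}), and the argument there is precisely your local-normal-form computation — the Hessian of $L_a\circ f$ at $q$ is block-diagonal with tangential block equal to $Hess(L_a\circ f|_{A_1(f)},q)$ and transverse block diagonal with entries $2c$ and $-2c$, so the transverse index is $\lambda$ or $m-n+1-\lambda$ according to the sign of $c$, and these agree mod $2$ exactly because $m-n+1$ is even. The only cosmetic slip is that $c=\partial\tilde L/\partial y_n$ should be evaluated at $y(f(q))=\vec{0}$ rather than at $q$; nothing in the argument depends on this.
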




\section{Dutertre-Fukui's Theorem}\label{DutertreFukuiTheorem}

Let $M$ be a compact manifold of dimension $m$, $N$ a connected manifold of dimension $n$ and $f:M\rightarrow N$ a $C^{\infty}$ Morin map defined on $M$, with $m>n$ and $m-n$ odd.\\

Note that, if $m-n$ is odd, then $\lambda\equiv m-n-\lambda+1 \mod2$. If $k$ is odd, N. Dutertre and T. Fukui introduced in \cite{Dutertrefukui} a definition for the subsets $A_k^+(f)$ and $A_k^-(f)$ of $A_k(f)$ given in terms of the parity of the Morse index $\lambda$ of the quadratic part of the function $y_n\circ f$ in (\ref{formalocalcap2}):

\begin{defi}\cite[p. 186]{Dutertrefukui} Let $k$ be odd, then $$\renewcommand{\arraystretch}{1.5}{\begin{array}{c} A_{k}^{+}(f) = \{p\in M: p\in A_{k}(f) \text{ with } \lambda \equiv 0 \mod 2\};\\
 A_{k}^{-}(f) = \{p\in M: p\in A_{k}(f) \text{ with } \lambda \equiv 1 \mod 2\}.
\end{array}}$$
\end{defi}

It is well known that for $k\geq1$, the $A_k(f)$'s and $\overline{A_k(f)}$'s are smooth manifolds of dimension $(n-k)$ such that $\overline{A_k(f)}=\displaystyle\mycup\limits_{i=1}^{n}A_i(f)$ (see \cite{Fukuda}). 
In \cite{Dutertrefukui}, it is shown a similar result for the subsets $A_k^+(f)$ and $A_k^-(f)$. That is:

\begin{prop}\cite[Proposition 6.1]{Dutertrefukui} \label{PropDF} If $k$ is odd, then $\overline{A_{k}^{+}(f)}$ and $\overline{ A_{k}^{-}(f)}$ are compact manifolds with boundary of dimension $n-k$. 
Moreover, $\partial \overline{ A_{k}^{+}(f)}=\partial \overline{ A_{k}^{-}(f)}=\overline{A_{k+1}(f)}.$
\end{prop}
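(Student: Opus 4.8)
The plan is to reduce the global statement to a local normal-form computation at the points of $\overline{A_{k+1}(f)}$, which is the only place where $\overline{A_k^+(f)}$ and $\overline{A_k^-(f)}$ can fail to be open submanifolds of $\overline{A_k(f)}$. First I would observe that the index $\lambda$ attached to a point of $A_k(f)$ is locally constant on $A_k(f)$, being the index of a continuously varying nondegenerate quadratic form; hence $A_k^+(f)$ and $A_k^-(f)$ are unions of connected components of $A_k(f)$, so each is open and closed in $A_k(f)$ and is a smooth $(n-k)$-manifold without boundary. Using Lemma \ref{propriedadesak}(2) in the form $\overline{A_k(f)}=A_k(f)\cup\overline{A_{k+1}(f)}$ together with the closedness of $A_k^{\pm}(f)$ in $A_k(f)$, one gets $\overline{A_k^{+}(f)}\cap A_k(f)=A_k^{+}(f)$, so that $\overline{A_k^{+}(f)}\setminus A_k^{+}(f)\subseteq\overline{A_{k+1}(f)}$, and likewise for the minus set. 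Everything therefore comes down to understanding, near a point of $\overline{A_{k+1}(f)}$, how the two sets accumulate.

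Next I would carry out the local computation at a point $q\in A_{k+1}(f)$ using the normal form (\ref{formalocalcap2}) of type $A_{k+1}$. Writing $u=x_n$ and $\phi(u)=u^{k+2}+\sum_{i=1}^{k}x_i u^{k+1-i}$, the singular set of $f$ is $\{\phi'(u)=0,\ x_{n+1}=\cdots=x_m=0\}$, and a point of it is of type $A_j$ exactly when $u$ is a root of $\phi'$ of multiplicity $j$. Since $\phi'$ has degree $k+1$, leading coefficient $k+2$, and a vanishing $u^k$-coefficient, a root $s$ of multiplicity $\geq k$ forces $\phi'(u)=(k+2)(u-s)^{k}(u+ks)$. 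This shows that $\overline{A_k(f)}$ is, near $q$, the smooth $(n-k)$-manifold parametrized by $(s,x_{k+1},\dots,x_{n-1})$ with $u=s$ and $x_1,\dots,x_k$ read off from the factorization, that $\overline{A_{k+1}(f)}$ is the hypersurface $\{s=0\}$, and that $A_k(f)$ is its complement $\{s\neq 0\}$.

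The heart of the matter, and the step I expect to be the main obstacle, is to show that the parity of $\lambda$ jumps across $\{s=0\}$, so that $A_k^{+}(f)$ and $A_k^{-}(f)$ are exactly the two local sides $\{s>0\}$ and $\{s<0\}$. Expanding $\phi$ at its critical point $u=s$ gives $\phi(u)-\phi(s)\sim (k+2)\,s\,(u-s)^{k+1}$, so the leading behaviour in the $x_n$-direction has sign equal to that of $s$. Here is where $k$ odd enters: then $k+1$ is even, this sign cannot be removed by rescaling $x_n$, and bringing $f$ into the form (\ref{formalocalcap2}) with leading term $+x_n^{k+1}$ forces, when $s<0$, the reflection $y_n\mapsto -y_n$. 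This reflection reverses the signature of the transverse quadratic form, changing $\lambda$ into $m-n-\lambda+2$; since $m-n$ is odd this is a change of parity. Thus $\{s>0\}$ and $\{s<0\}$ carry opposite parities of $\lambda$, i.e. one side is $A_k^{+}(f)$ and the other $A_k^{-}(f)$.

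Finally I would globalize. Because $\lambda\bmod 2$ is locally constant on $A_k(f)$ and $\overline{A_{k+1}(f)}$ is a smooth hypersurface in the manifold $\overline{A_k(f)}$ (Lemma \ref{propriedadesak}), the complement $\overline{A_k(f)}\setminus\overline{A_{k+1}(f)}$ has, near each point of $\overline{A_{k+1}(f)}$, two connected sides, each contained entirely in $A_k^{+}(f)$ or in $A_k^{-}(f)$; the computation above, valid on the dense stratum $A_{k+1}(f)$ and propagated to $\overline{A_{k+2}(f)}$ by local constancy of the labelling, shows these two sides always carry different labels. Consequently, near every point of $\overline{A_{k+1}(f)}$ the set $\overline{A_k^{+}(f)}=A_k^{+}(f)\cup\overline{A_{k+1}(f)}$ is diffeomorphic to the half-space $\{s\geq 0\}$, whereas at points of $A_k^{+}(f)$ it is an interior point; hence $\overline{A_k^{+}(f)}$ is a smooth $(n-k)$-manifold with boundary $\overline{A_{k+1}(f)}$, and the same argument applies to $\overline{A_k^{-}(f)}$. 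Compactness is immediate, since each is closed in the compact manifold $M$.
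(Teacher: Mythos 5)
Your argument is correct, but note that this paper does not actually prove Proposition \ref{PropDF}: it is stated as a citation of \cite[Proposition 6.1]{Dutertrefukui}, so there is no in-paper proof to compare against. What you have reconstructed is essentially the argument behind the cited result: your normal-form computation at a point of $A_{k+1}(f)$ (a root of $\phi'$ of multiplicity $\geq k$ forcing $\phi'(u)=(k+2)(u-s)^{k}(u+ks)$, so that $\overline{A_k(f)}$ is locally smooth with $\overline{A_{k+1}(f)}=\{s=0\}$ as a hypersurface), together with the expansion $\phi(u)-\phi(s)\sim (k+2)s(u-s)^{k+1}$ showing that the sign of the leading coefficient, hence the parity of the transverse index, flips across $\{s=0\}$ exactly because $k+1$ is even and $m-n$ is odd, is precisely the mechanism that yields the side characterization ($\mu$ even: $q\in A_k^{+}(f)\Leftrightarrow u_n(q)>0$, and reversed for $\mu$ odd) which this paper quotes from the proof of \cite[Proposition 6.1]{Dutertrefukui} and uses in the proof of Lemma \ref{lemadaperturbacao}. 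Your one terse step, extending the opposite-labels conclusion from the dense stratum $A_{k+1}(f)$ to points of $\overline{A_{k+2}(f)}$, is sound as sketched: in a chart where the pair $\bigl(\overline{A_k(f)},\overline{A_{k+1}(f)}\bigr)$ is standard (Lemma \ref{propriedadesak}), each local side is connected with constant label, and density of $A_{k+1}(f)$ in $\overline{A_{k+1}(f)}$ supplies a nearby point where the labels are known to differ, forcing the two sides of the chart to carry opposite labels; spelling this out would make the write-up complete.
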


Finally, N. Dutertre and T. Fukui used integral calculus due to O. Viro \cite{Viro} to prove the following theorem that is an improvement of a result of T. Fukuda \cite{Fukuda} for $N=\R^n$ and O. Saeki \cite{Saeki} for a general $N$:

\begin{teo}\cite[Theorem 6.2]{Dutertrefukui}\label{DutertreFukuiRn}
Let $f:M\rightarrow N$ be a Morin map. Assume that $M$ is a $m$-dimensional compact manifold, $N$ is a $n$-dimensional connected manifold and $m-n$ is odd. Then, $$\chi(M)=\sum_{k: \text{ odd}}{\left[\chi(\overline{A_k^+(f)})-\chi(\overline{A_k^-(f)})\right]}.$$
\end{teo}


\section{Morse theory for manifolds with boundary}\label{MorseTheory}

In this section we recall the main results on Morse theory for manifolds with boundary presented in \cite[Paragraph 3]{HammLe}.\\

Let $(M,\partial M)$ be a compact smooth manifold with boundary of dimension $m$ and let $f:M\rightarrow\R$ be a $C^{\infty}$ function. Let us call $\partial f$ the restricted map of $f$ to the boundary $\partial M$, $f|_{\partial M}$, and $f^{\circ}$ the restricted map of $f$ to ${M\setminus\partial M}$, $f|_{M\setminus\partial M}$. The critical points of $f$ consist of the critical points of $f^{\circ}$ and the critical points of $\partial f$.

Let $q\in \partial M$ be a critical point of $\partial f$. Let $(U,h)$ be a chart in a neighborhood of $q$, that is, $U$ is a neighborhood of $q$ in M, $h$ is a $C^{\infty}$ diffeomorphism of $U$ to $B_{\varepsilon}^{-}$, which is defined by $B_{\varepsilon}^{-}=\{x\in\R^n|\sum_{i=1}^{n}x_i^{2}<\varepsilon \text{ and } x_n\leq0\}$, such that $h(q)=0$. The function $f\circ h^{-1}$ is $C^{\infty}$ on $B_{\varepsilon}^{-}$ and can be extended to a function $\tilde{f}$ on $B_{\varepsilon}^{\circ}=\{x\in\R^n|\sum_{i=1}^{n}x_i^{2}<\varepsilon\}$.

\begin{defi}
We say that $q\in\partial M$ is a correct critical point of $f$ if $q$ is a critical point of $\partial f$ and 0 is not a critical point of $\tilde{f}$. We say that $q\in\partial M$ is a non-degenerate correct critical point of $f$ if $q$ is a correct critical point of $f$ and a non-degenerate critical point of $\partial f$.
\end{defi}

The tangent space to $B_{\varepsilon}^{\circ}$ at $0$ is $\R^n$ and the kernel of the derivative $d\tilde{f}(0)$ is the hyperplane $x_n=0$. This hyperplane divides $\R^n$ in two open half-spaces where $d\tilde{f}(0)$ does not vanish and takes opposite signs. Therefore, the correct critical points of $f$ are the critical points $q$ for which the derivative $df(q)$ vanishes on the tangent space of the boundary of $M$ at $q$, $T_q(\partial M)$, but does not vanish in whole tangent space of the manifold $M$, $T_qM$.

\begin{defi} Let $M$ be provided with a Riemannian metric. If $q$ is a correct critical point of $f$ then:
\begin{itemize}
\item If the sign of $d\tilde{f}(0)$ in the half-space defined by $x_n>0$ is negative, we say that $q$ is a critical point with gradient of $f$ ``pointing inwards''.
\item If the sign of $d\tilde{f}(0)$ in the half-space defined by $x_n>0$ is positive, we say that $q$ is a critical point with gradient of $f$ ``pointing outwards''.
\end{itemize} 
\end{defi}

\begin{defi}A $C^{\infty}$ function $f:M\rightarrow\R$ is a correct function if every critical point of $\partial f$ is a correct critical point of $f$. A $C^{\infty}$ function $f:M\rightarrow\R$ is a correct Morse function if $f^{\circ}$ and $\partial f$ admit only non-degenerate critical points and if $f$ admits only correct critical points on $\partial M$.
\end{defi}


\begin{prop}\label{abertodenso} Let $M$ be a $C^{\infty}$ compact manifold with boundary, correct Morse functions form a dense open subset of $C^{\infty}(M,\R)$.
\end{prop}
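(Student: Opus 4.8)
The plan is to prove openness and density separately, the latter through a finite-dimensional family of linear perturbations combined with Sard's theorem. Throughout I use that the conditions defining a correct Morse function split into three: (a) every critical point of $f^{\circ}$ is non-degenerate; (b) every critical point of $\partial f$ is non-degenerate; and (c) $df(q)\neq 0$ in $T_q^*M$ for every $q\in\partial M$. Condition (c) guarantees that each critical point of $\partial f$ is correct (its full differential does not vanish) and in fact that $f$ has no critical point on $\partial M$ at all, so (a)+(b)+(c) is equivalent to being a correct Morse function. I will show that the set of $f$ satisfying (a), (b), (c) is open and dense; since $M$ is compact, the weak and strong $C^{\infty}$ topologies coincide, so no distinction is needed.

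For openness, the idea is to confine the action to compact sets. Because $\partial M$ is compact and $q\mapsto df(q)$ is continuous in the $C^1$ topology, condition (c) forces $|df|\geq\delta>0$ on a compact collar $\bar{C}\cong\partial M\times[0,\varepsilon]$ of $\partial M$; this lower bound, and hence both (c) and the absence of critical points inside $\bar{C}$, persists under $C^1$-small perturbations. Consequently all interior critical points of every nearby $g$ lie in the compact set $M''=M\setminus(\partial M\times[0,\varepsilon))$, where $f$ has only finitely many non-degenerate critical points. Since transversality of $dg$ to the zero section of $T^*M$ over the compact set $M''$ is a $C^2$-open condition, (a) persists. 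Finally $\partial M$ is a compact manifold without boundary, so the classical openness of the Morse condition on compact manifolds shows that (b) persists under $C^2$-small perturbations of $f|_{\partial M}$. Thus correct Morse functions form an open set.

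For density, I would fix a smooth embedding $\iota\colon M\hookrightarrow\R^N$ (which exists since $M$ is compact) and, given $f$, consider the family $F(x,a)=f(x)+\langle a,\iota(x)\rangle$ for $a\in\R^N$. The key algebraic fact is that for each $x$ the map $a\mapsto d_x\langle a,\iota(x)\rangle=(d\iota_x)^{*}a$ is surjective onto $T_x^*M$, because $d\iota_x$ is injective; the analogous statement holds with $\partial M$ in place of $M$. Hence the section $(x,a)\mapsto d_xF(x,a)$ of $T^*M$ is submersive in the $a$-direction, so it is transverse to the zero section, and the parametric transversality theorem gives $dF_a\pitchfork 0_{T^*M}$ on $\mathrm{int}(M)$ for almost every $a$, which is (a). Applying the same argument to $F_a|_{\partial M}$ yields (b) for almost every $a$. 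For (c), I set $W=\{(q,a)\in\partial M\times\R^N : d_qF(q,a)=0\in T_q^*M\}$; the surjectivity of $(d\iota_q)^{*}$ makes the corresponding section transverse to the zero section in the $a$-direction, so $W$ is a submanifold of codimension $m$, whence $\dim W=(m-1)+N-m=N-1$, and Sard's theorem applied to the projection $W\to\R^N$ shows its image has measure zero. Intersecting the three full-measure sets, I obtain $a$ arbitrarily close to $0$ for which $F_a$ satisfies (a), (b) and (c); since $\|a\|$ is small and $\iota$ is fixed, $F_a$ is $C^{\infty}$-close to $f$, proving density.

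The main obstacle is condition (c), the ingredient special to manifolds with boundary: I must rule out, for generic $f$, the existence of a full critical point of $f$ lying on $\partial M$. This is precisely the dimension count showing that the bad locus $W$ has dimension $N-1<N$ and therefore projects to a null set in parameter space. The surjectivity of $(d\iota_x)^{*}$ plays a double role here, simultaneously making $W$ a manifold of the expected codimension and supplying the transversality of the perturbation family needed to obtain (a) and (b); verifying this surjectivity and assembling the three generic conditions into a single small perturbation is the technical heart of the argument.
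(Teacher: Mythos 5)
Your proposal cannot be compared with a proof in the paper, because the paper does not prove Proposition \ref{abertodenso}: it is recalled without proof from \cite{HammLe} (the source for all of Section \ref{MorseTheory}). Judged on its own merits, your argument is correct and is the classical genericity proof adapted to the boundary setting. Your reduction to conditions (a), (b), (c) faithfully captures the paper's definitions: since any point of $\partial M$ at which the full differential vanishes is automatically a critical point of $\partial f$, correctness of all critical points of $\partial f$ is exactly the nonvanishing of $df$ along $\partial M$, which is your (c). The openness argument (a uniform lower bound $|df|\geq\delta$ on a compact collar, forcing all interior critical points of nearby functions into a fixed compact set where transversality of the differential to the zero section of $T^*M$ is $C^2$-open, plus classical openness of the Morse condition on the closed manifold $\partial M$) is sound; note you implicitly fix a Riemannian metric to make sense of $|df|$, which is harmless. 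The density argument is the standard embedding-plus-linear-perturbations proof of genericity of Morse functions, with the genuinely boundary-specific condition (c) correctly disposed of by the dimension count: the bad locus $W\subset\partial M\times\R^N$ is cut out transversally as the zero set of a rank-$m$ bundle section submersive in the $a$-direction, so $\dim W=N-1$ and its projection to $\R^N$ is a null set (this is the easy case of Sard, the image of a smooth map from a manifold of dimension $<N$). Intersecting the two full-measure sets from parametric transversality with the complement of this null set, and letting $\|a\|\to 0$ with $\iota$ fixed, gives $C^k$-approximation for every $k$, hence density in $C^{\infty}(M,\R)$ on the compact $M$.

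Two cosmetic points only. First, your phrase ``$f$ has no critical point on $\partial M$ at all'' clashes with the paper's convention that the critical points of $f$ include those of $\partial f$; what you mean, and what (c) says, is that there is no point of $\partial M$ at which the differential of $f$ vanishes on all of $T_qM$. Second, in the openness step you should record that correctness of the boundary critical points of a perturbed $g$ follows from the same collar bound $|dg|>0$ that excludes interior critical points there; as written this is implicit. Neither affects the validity of the argument.
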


\begin{teo}\label{teobordo}
Let $M$ be a $C^{\infty}$ compact manifold with boundary of dimension $m$ provided with a Riemannian metric and let $f:M\rightarrow\R$ be a correct Morse function. Let $p_1,\ldots,p_n$ be the critical points of $f^{\circ}$ and $\lambda_1,\ldots,\lambda_n$ their respective Morse indices and let $q_1,\ldots,q_{\l}$ be the correct critical points of $f$ and $\mu_1,\ldots,\mu_{\l}$ their respective Morse indices. Then: $$\chi(M)=\displaystyle\sum_{i=1}^{n}(-1)^{\lambda_i}+\displaystyle\sum_{q_j|\nabla f \text{ \textit{pointing inwards}}}(-1)^{\mu_j}.$$
\end{teo}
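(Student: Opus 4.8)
The plan is to run the classical sublevel-set argument of Morse theory, monitoring $\chi(M_t)$ for $M_t=f^{-1}((-\infty,t])$ as $t$ increases from below $\min f$ to above $\max f$. Since $f$ is a correct Morse function on a compact manifold, it has only finitely many critical points, interior and on $\partial M$, and after a harmless perturbation we may assume their critical values are distinct. As $M_t=\emptyset$ for small $t$ and $M_t=M$ for large $t$, a telescoping argument reduces the theorem to computing, for each critical value $c$, the jump $\chi(M_{c+\delta})-\chi(M_{c-\delta})$ across the unique critical point at level $c$, since $\chi(M)=\sum_c\bigl[\chi(M_{c+\delta})-\chi(M_{c-\delta})\bigr]$.

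For the intermediate step that $\chi(M_t)$ is constant between consecutive critical values, and for the retractions near $\partial M$, I would invoke the Morse theory for manifolds with boundary recalled in Section \ref{MorseTheory}: one fixes a gradient-like vector field for $f$ that is inward-pointing or tangent along $\partial M$ away from the correct critical points, whose downward flow preserves $M$ and exhibits $M_b$ as deformation retracting onto $M_a$ whenever $[a,b]$ contains no critical value. An interior critical point $p_i$ of index $\lambda_i$ is then handled by the standard local model: $M_{c+\delta}$ is homotopy equivalent to $M_{c-\delta}$ with a $\lambda_i$-cell attached, so the jump equals $(-1)^{\lambda_i}$.

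The heart of the proof is the boundary critical points, where the inward/outward dichotomy enters. Near a correct critical point $q$ I would choose coordinates with $M=\{x_m\le 0\}$, $\partial M=\{x_m=0\}$, $h(q)=0$; by correctness the extension $\tilde f$ has nonvanishing, purely normal differential at $0$, so after the splitting lemma one writes $\tilde f=f(q)\pm x_m+Q(x')$, where $x'=(x_1,\dots,x_{m-1})$, $Q$ is the nondegenerate quadratic form of $\partial f$ at $q$ of index $\mu_j$, and the sign of the $x_m$-term records inward ($-$) versus outward ($+$). In the inward case $M_t\cap U=\{\,Q(x')-(t-f(q))\le x_m\le 0\,\}$ deformation retracts, by collapsing each normal interval to $x_m=0$, onto the boundary sublevel set $\{x_m=0,\ Q(x')\le t-f(q)\}$; crossing $c=f(q)$, this boundary sublevel undergoes exactly the index-$\mu_j$ cell attachment of the Morse function $\partial f$, so the jump is $(-1)^{\mu_j}$. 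In the outward case the downward gradient points strictly into the interior at $q$, so $q$ does not obstruct the downward flow, $M_{c+\delta}$ deformation retracts onto $M_{c-\delta}$, and the jump is $0$. Summing over all critical values yields $\chi(M)=\sum_i(-1)^{\lambda_i}+\sum_{q_j:\ \nabla f\text{ pointing inwards}}(-1)^{\mu_j}$.

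I expect the main obstacle to be making the near-boundary retractions fully rigorous: namely, constructing the downward flow that preserves $M$ in the presence of boundary critical points, and gluing the local cell attachment (inward) or local product collapse (outward) to the global sublevel sets so that the jumps add up correctly. This is precisely the place where the \emph{correct} Morse function hypothesis and the boundary Morse theory machinery of Section \ref{MorseTheory} are indispensable; the interior contributions and the final summation are routine bookkeeping by comparison.
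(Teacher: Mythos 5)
The paper contains no proof of this statement to compare against: Theorem \ref{teobordo} is recalled as background from \cite{HammLe} (Paragraph 3), and Section \ref{MorseTheory} simply quotes it. Judged on its own, your sublevel-set argument is correct and is essentially the classical proof behind the cited result: telescoping $\chi(M_t)$ across (assumed distinct) critical values, the usual $\lambda_i$-cell attachment at interior critical points, and at a correct boundary critical point the local model $f=f(q)\pm x_m+Q(x')$ on $\{x_m\le 0\}$, where the inward case ($-x_m$, matching the paper's convention that $d\tilde f(0)$ is negative on $\{x_m>0\}$) makes $M_t\cap U$ retract fiberwise onto the sublevel sets of $\partial f$, giving a jump of $(-1)^{\mu_j}$, while in the outward case $M_{c+\delta}$ retracts onto $M_{c-\delta}$ and the jump is $0$; your sanity conventions check out (e.g.\ $f(x)=x$ on $[0,1]$ gives $\chi=1$ from the single inward endpoint). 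Two steps deserve more care than your sketch gives them. First, the normal form is not a direct consequence of ``the splitting lemma'': you need a boundary-preserving Morse lemma. Put $\partial f$ in Morse form on $\partial M$, extend to collar coordinates, write $f(x',x_m)=f(q)+Q(x')+x_m\,g(x',x_m)$ by Hadamard's lemma, note $g(0)\neq 0$ precisely because $q$ is correct, and absorb $g$ into a new normal coordinate; the change of variables must preserve $\{x_m=0\}$ and the sign of $x_m$, since otherwise the inward/outward labels in the model are meaningless. Second, the retractions between critical levels require a gradient-like vector field tangent to $\partial M$ away from the $q_j$, built by patching the local models with a partition of unity; you rightly flag this as the technical crux, and it is exactly what the correctness hypothesis (no critical points of $\partial f$ where $df$ vanishes on all of $T_qM$) makes possible. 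Finally, your ``harmless perturbation'' to separate critical values should be done with bump functions constant near each critical point, so that for small $\varepsilon$ no critical points are created or destroyed and the function stays a correct Morse function (consistent with the openness in Proposition \ref{abertodenso}); alternatively, isolated critical points at a common level can be handled simultaneously by locality. With these routine repairs your proof is complete and agrees in substance with the Hamm--L\^e argument the paper cites.
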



\section{Correct critical points}\label{correctpts}

In order to prove Theorem \ref{DutertreFukuiRn}, we have to study the sum $$\displaystyle\sum_{k: \text{ odd}}{\left[\chi(\overline{A_k^+(f)})-\chi(\overline{A_k^-(f)})\right]},$$ to do this, we will analyse the following expression $$\chi(\overline{A_k^+(f)})-\chi(\overline{A_k^-(f)})+\chi(\overline{A_{k+2}^+(f)})-\chi(\overline{A_{k+2}^-(f)}),$$ for $k=1, \ldots, n-2$ if $n$ is odd, or $k=1, \ldots, n-3$ if $n$ is even.

By Proposition \ref{PropDF}, if $k$ is odd, then $\overline{A_k^+(f)}$ and $\overline{A_k^-(f)}$ are manifolds with boundary of dimension $n-k$, such that $\partial\overline{A_{k}^+(f)}=\overline{A_{k+1}(f)}=\partial\overline{A_{k}^-(f)}$, $\overline{A_{k}^+(f)}=A_{k}^+(f)\cup\overline{A_{k+1}(f)}$ and $\overline{A_{k}^-(f)}=A_{k}^-(f)\cup\overline{A_{k+1}(f)}$. Furthermore, by Theorem \ref{teobordo} if $M$ is a Riemannian manifold and $F:\overline{A_k^+(f)}\rightarrow\R$ is a correct Morse function, then \begin{equation}\label{chiequation}\chi(\overline{A_k^+(f)})=\displaystyle\sum_{p\in C(F^{\circ})}{(-1)^{\lambda^+_k(p)}}+\displaystyle\sum_{p\in C(\partial F),\nabla F \text{ points inwards}}{(-1)^{\overline{\lambda}_k(p)}},\end{equation} where $F^{\circ}$ denotes the restriction $F|_{A_k^+(f)}$, $\partial F$ denotes the restriction $F|_{\partial\overline{A_k^+(f)}}$, $\lambda^+_k(p)$ denotes the Morse index of $F^{\circ}$ at a critical point $p\in A_k^+(f)$ and $\overline{\lambda}_k(p)$ denotes the Morse index of $\partial F$ at a correct critical point $p$, such that the gradient $\nabla F(p)$ is pointing inwards the manifold $\overline{A_k^+(f)}$. 
Similarly, the result holds for $F:\overline{A_k^-(f)}\rightarrow\R$.\\

From now on, we assume that $M$ is provided with a Riemannian metric. Let us consider the mappings $L_a\circ f$ from Section \ref{lemmasFukuda} and let us suppose that $a\in\R^n\setminus\{\vec{0}\}$ is such that $L_a\circ f(x)$, $L_a\circ f|_{A_k(f)}$ and $L_a\circ f|_{\overline{A_k(f)}}$ are Morse functions that satisfy the properties of Fukuda's lemmas. 
We will apply formula (\ref{chiequation}) to the functions $L_a\circ f|_{\overline{A_k^{+}(f)}}$ and $L_a\circ f|_{\overline{A_k^{-}(f)}}$ in order to study $\chi(\overline{A_k^{+}(f)})$ and $\chi(\overline{A_k^{-}(f)})$, respectively. To do this, it is necessary to make some considerations about correct and non-correct critical points of $L_a\circ f|_{\overline{A_k(f)}}$.

We have to verify if the critical points of the restriction $L_a\circ f|_{\partial\overline{A_k^{+}(f)}}$ are indeed correct critical points. That is, given a point $p\in C(L_a\circ f|_{\overline{A_{k+1}(f)}})$ we have to check that $p\notin C(L_a\circ f|_{\overline{A_k(f)}}).$\\

If $p\in C(L_a\circ f|_{\overline{A_{k+1}(f)}})$, then $p\in\overline{A_{k+1}(f)}\setminus\overline{A_{k+3}(f)}= A_{k+1}(f)\cup A_{k+2}(f)$. Suppose that $p\in A_{k+2}(f)$. By Lemma \ref{Laproperties} item 2, $p\notin C(L_a\circ f|_{\overline{A_k(f)}})$. Thus, $p$ is correct.

On the other hand, if $p\in A_{k+1}(f)\cap C(L_a\circ f|_{\overline{A_{k+1}(f)}})$, then $p\in C(L_a\circ f|_{A_{k+1}(f)})$ and by Lemma \ref{lemaseparado}, $p\in C(L_a\circ f|_{\overline{A_k(f)}})$. Therefore, $p$ is not correct in this case.\\

As we mentioned in Proposition \ref{abertodenso}, the correct Morse functions defined on a compact manifold $N$ form an open and dense set in the function space $C^{\infty}(N,\R)$. 
Indeed, we will prove in the following lemma that it is possible to slightly perturb the function $L_a\circ f$ in neighborhoods of non-correct critical points in order to obtain Morse functions which critical points at the boundary are all corrects. Then, we will apply (\ref{chiequation}) to calculate $\chi(\overline{A_k^+(f)})$ and $\chi(\overline{A_k^-(f)})$.  

\begin{lema}\label{lemadaperturbacao} Let $p\in A_{k+1}(f)$ be a non-correct critical point of $L_a\circ f|_{\overline{A_k(f)}}$. Then:
\begin{enumerate}
\item There exists a perturbation $\widetilde{L_a\circ f}$ of $L_a\circ f|_{\overline{A_k(f)}}$ in a neighborhood $\mathcal{U}_p$ of $p$, such that $\widetilde{L_a\circ f}$ is a correct Morse function and $p\in C(\widetilde{L_a\circ f})$;
\item In the neighborhood $\mathcal{U}_p$, $\exists!$ $\tilde{p}\in C(\widetilde{L_a\circ f})$ such that $\tilde{p}\in A_k^+(f)\cup A_k^-(f)$;
\item The Morse indices of $\widetilde{L_a\circ f}$ at the points $p$ and $\tilde{p}$ are such that $p$ and $\tilde{p}$ do not interfere in the computation of $\chi\left(\overline{A_k^{+}(f)}\right)-\chi\left(\overline{A_k^{-}(f)}\right)$.
\end{enumerate}
\end{lema}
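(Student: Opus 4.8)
The plan is to reduce everything to an explicit local normal form for $L_a\circ f|_{\overline{A_k(f)}}$ near $p$, and then perform a one-parameter perturbation that resolves the non-correct critical point into a correct boundary critical point together with a single interior critical point.

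First I would produce the normal form. Since $p\in A_{k+1}(f)$ is a non-correct critical point, it is a critical point of $L_a\circ f|_{\overline{A_k(f)}}$, which is a Morse function by item 5 of Lemma \ref{Laproperties}; restricting the vanishing differential, $p$ is also a critical point of $L_a\circ f|_{A_{k+1}(f)}$, non-degenerate by items 1 and 3 of the same lemma. Thus both the Hessian of $L_a\circ f|_{\overline{A_k(f)}}$ at $p$ and its restriction to $T_p\overline{A_{k+1}(f)}$ are non-degenerate. By Proposition \ref{PropDF}, near $p$ the hypersurface $\overline{A_{k+1}(f)}$ separates $\overline{A_k(f)}$ into the two pieces $A_k^+(f)$ and $A_k^-(f)$, so I can choose coordinates $(t,v)=(t,v_1,\dots,v_{n-k-1})$ on $\overline{A_k(f)}$ centered at $p$ with $\overline{A_{k+1}(f)}=\{t=0\}$ and $A_k^{\pm}(f)$ the half-spaces $\{\pm t>0\}$. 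Applying a Morse lemma relative to the hypersurface $\{t=0\}$ (the block $f_{vv}$ being non-degenerate lets me straighten the $v$-critical point via the implicit function theorem, and the non-vanishing Schur complement straightens the transverse direction) yields
$$L_a\circ f|_{\overline{A_k(f)}}=c+\epsilon t^2+Q(v),\qquad \epsilon=\pm1,$$
where $Q$ is a non-degenerate quadratic form whose index equals $\overline{\lambda}_k(p)=:\ell$.

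Next I would define the perturbation. Fix a bump function $\beta$ equal to $1$ near $p$ and supported in a small neighborhood $\mathcal{U}_p$ containing no other critical point of $L_a\circ f|_{\overline{A_k(f)}}$, and set $\widetilde{L_a\circ f}=L_a\circ f|_{\overline{A_k(f)}}+b\,\beta\, t$ for a small constant $b$ whose sign is chosen so that the new interior critical point lands on the prescribed side. Since the added term vanishes on $\{t=0\}$, the boundary restriction is unchanged near $p$, so $p$ stays a non-degenerate critical point of $\partial(\widetilde{L_a\circ f})$ of index $\ell$, while $\partial_t\,\widetilde{L_a\circ f}(p)=b\neq0$ makes $p$ a correct critical point; this gives part 1. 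Solving $2\epsilon t+b=0$ and $\nabla_vQ=0$ produces exactly one interior critical point $\tilde p=(-b/2\epsilon,0)$, lying in $A_k^+(f)\cup A_k^-(f)$; for $b$ small enough the gradient stays nonzero in the transition region so that no further critical points are created in $\mathcal{U}_p$ and non-degeneracy is preserved elsewhere, giving part 2.

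Finally, for part 3 I would bookkeep the contributions of $p$ and $\tilde p$ to formula (\ref{chiequation}) on each of $\overline{A_k^+(f)}$ and $\overline{A_k^-(f)}$, in the two cases $\epsilon=\pm1$, using that $\tilde p$ has interior index $\ell$ when $\epsilon=+1$ and $\ell+1$ when $\epsilon=-1$, that $p$ has boundary index $\ell$, and that the gradient of $\widetilde{L_a\circ f}$ at $p$ points inward on exactly one of the two sides. A direct case check shows that $p$ and $\tilde p$ together contribute the same amount to $\chi(\overline{A_k^+(f)})$ as to $\chi(\overline{A_k^-(f)})$, so their net contribution to the difference $\chi(\overline{A_k^+(f)})-\chi(\overline{A_k^-(f)})$ vanishes, which is the assertion of part 3. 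I expect the main obstacle to be the adapted normal form: one must verify the relative Morse lemma can be applied so that the separating hypersurface $\{t=0\}$ is preserved and identified with $\overline{A_{k+1}(f)}$; once the decomposition $c+\epsilon t^2+Q(v)$ is in hand, the perturbation and the index count are elementary.
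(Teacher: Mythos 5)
Your proposal is correct, and it reaches the paper's conclusion by a genuinely different technical route. The perturbation itself --- adding a small linear term in the coordinate transverse to $\overline{A_{k+1}(f)}$, which leaves the boundary restriction untouched, makes $p$ a correct critical point, and creates exactly one interior critical point $\tilde p$ --- is the same device as the paper's $\widetilde{L_a\circ f}=L_a\circ f+\varepsilon x_{n-k}$ (your bump factor $\beta$ is a harmless refinement; the paper handles globalization later via a partition-of-unity remark in Section \ref{newproof}). The difference lies in how the position and index of $\tilde p$ are controlled. The paper normalizes only the boundary restriction, writes $L_a\circ f$ as a quadratic form in $(x_1,\dots,x_{n-k-1})$ plus a remainder $h$ vanishing on $\{x_{n-k}=0\}$, and then works through Hadamard's lemma, Cramer's rule and a Hessian comparison to prove $x_{n-k}(\tilde p)\neq0$ and to extract the sign relation (\ref{sinal}) linking the boundary index of $p$ to the interior index of $\tilde p$; it then invokes the Dutertre--Fukui $\mu$-parity characterization of $A_k^{\pm}(f)$ to organize the case analysis. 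You instead normalize the full function first, to $c+\epsilon t^2+Q(v)$, after which $\tilde p=(-b/(2\epsilon),0)$, its index ($\ell$ for $\epsilon=+1$, $\ell+1$ for $\epsilon=-1$) and the inward/outward direction at $p$ are read off by inspection; your four-case check then reproduces (\ref{sinal}) as the transparent statement that $\tilde p$ lies on the side into which the gradient at $p$ points exactly when the two indices differ by one. A pleasant byproduct of your route: the cancellation is symmetric under swapping the labels $+$ and $-$ (either both contributions fall on one side and cancel within that $\chi$, or they fall on opposite sides with equal sign and cancel in the difference), so within this lemma you never actually need the $\mu$-parity characterization the paper cites. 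The one step you must supply in full --- and you correctly flag it --- is the boundary-adapted Morse lemma: with $F_{vv}$ nondegenerate (Lemma \ref{Laproperties}, items 3 and 5, via Lemma \ref{lemaseparado}) and the full Hessian nondegenerate, the Schur complement $F_{tt}-F_{tv}F_{vv}^{-1}F_{vt}$ is automatically nonzero, and the straightening (implicit function theorem for the fiberwise critical locus $v=\phi(t)$ with $\phi(0)=0$, a $t$-parametrized Morse lemma in $v$, then a one-variable change in $t$ with positive derivative at $0$) can be carried out by diffeomorphisms preserving $\{t=0\}$ and each half-space $\{\pm t>0\}$; this is standard but it is precisely the work that the paper's explicit matrix computation with $A(x)$ and $A_{n-k}(x)$ replaces, so a complete write-up of your version would need to prove it rather than cite it as folklore.
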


\begin{proof} 

\item[\textit{1.}] Let $p\in A_{k+1}(f)$ be a non-correct critical point of $L_a\circ f|_{\overline{A_k(f)}}$. 
We consider local coordinates $x=(x_1,\ldots,x_{n-k-1}, x_{n-k})$ in a neighborhood $\vizp\subset\overline{A_{k}(f)}$ such that $p\in\vizp$, $x(p)=\vec{0}$. In $\vizp$, $L_a\circ f|_{\overline{A_{k+1}(f)}}$ may be written as \begin{equation}\label{Laemakmaisum} L_a\circ f(x_1,\ldots,x_{n-k-1},0)=-x_1^2-\ldots -x_{\lambda}^2+x_{\lambda +1}^2+\ldots+x_{n-k-1}^2\end{equation} and \small \begin{equation}\label{Laemak} L_a\circ f(x_1,\ldots,x_{n-k-1}, x_{n-k})=L_a\circ f(x_1,\ldots,x_{n-k-1},0)+h(x_1,\ldots,x_{n-k}),\end{equation} \normalsize for some function $h:x(\vizp)\subset\R^{n-k}\rightarrow\R$.  Moreover, there exists a small enough $\varepsilon>0$ such that the perturbation \begin{equation}\label{perturbation}\widetilde{L_a\circ f}(x_1,\ldots,x_{n-k})=L_a\circ f(x_1,\ldots,x_{n-k}) + \varepsilon x_{n-k} \end{equation} is also a Morse function, defined in an open neighborhood $\mathcal{V}\subset\R^{n-k}$, with $\vec{0}\in\mathcal{V}$. In particular, this small perturbation coincides with the initial function on $\overline{A_{k+1}(f)}$, next to $p$: $\widetilde{L_a\circ f}(x_1,\ldots,x_{n-k-1},0)=L_a\circ f(x_1,\ldots,x_{n-k-1},0)$; and the Hessian matrices coincide at the origin: $Hess(\widetilde{L_a\circ f}|_{\overline{A_{k+1}(f)}},\vec{0})=Hess(L_a\circ f|_{\overline{A_{k+1}(f)}},\vec{0}).$

Differentiating the equation (\ref{perturbation}), we obtain:
\begin{equation*}\renewcommand{\arraystretch}{2.5}{
\ffrac{\partial(\widetilde{L_a\circ f})}{\partial x_i}(\vec{0})=\left\{\begin{array}{ll}
\ffrac{\partial(L_a\circ f)}{\partial x_i}(\vec{0})=0, & \ i=1,\ldots, n-k-1;\\
\ffrac{\partial(L_a\circ f)}{\partial x_{n-k}}(\vec{0})+\varepsilon=\varepsilon, & \ i=n-k.
\end{array}\right.}
\end{equation*} Thus, the gradient of the function $\widetilde{L_a\circ f}$ restricted to the boundary $\overline{A_{k+1}(f)}$ at $\vec{0}\in\R^{n-k}$ is zero, $\nabla(\widetilde{L_a\circ f}|_{\overline{A_{k+1}}})(\vec{0})=(0,\ldots,0)$, and the gradient of the function $\widetilde{L_a\circ f}$ at $\vec{0}\in\R^{n-k}$, $\nabla(\widetilde{L_a\circ f})(\vec{0})=(0,\ldots,0,\varepsilon)$, is not zero. That is, $p$ is a correct critical point of $\widetilde{L_a\circ f}|_{\overline{A_k^+(f)}}$ and $\widetilde{L_a\circ f}|_{\overline{A_k^-(f)}}$. \\

\item[\textit{2.}] Since $p$ is a non-degenerate critical point of $L_a\circ f|_{\overline{A_k(f)}}$ then we have $\nabla(L_a\circ f)(\vec{0})=\vec{0}$ and $\det[Hess({L_a\circ f})(\vec{0})]\neq0.$ However, $Hess({L_a\circ f})(\vec{0})=Jac[\nabla({L_a\circ f})(\vec{0})].$ Thus, the map $\nabla(L_a\circ f):\mathcal{V}\subset\R^{n-k}\rightarrow\R^{n-k}$ is a local diffeomorphism in $\vec{0}\in\R^{n-k}$. Thereby, $\exists!$ $\tilde{p}$ in a neighborhood of $\vec{0}$ such that $\nabla(L_a\circ f)(\tilde{p})=(0,\ldots,0,-\varepsilon)$ and, consequently, $\exists!$ $\tilde{p}$ in a neighborhood of $\vec{0}$ such that $\nabla(\widetilde{L_a\circ f})(\tilde{p})=(0,\ldots,0).$ That is, $\tilde{p}\in C(\widetilde{L_a\circ f})\setminus C(L_a\circ f|_{\overline{A_k(f)}})$. In particular, we may consider such neighborhood such that \begin{equation}\label{hesscomparacao}\s\det\left[ Hess(L_a\circ f)(\vec{0})\right]=\s\det\left[ Hess(\widetilde{L_a\circ f})(\tilde{p})\right],\end{equation} where $\s$ denotes the sign of the determinants. 

In the next step, we will show that $x_{n-k}(\tilde{p})\neq0$. Let $x=(x_1,\ldots,x_{n-k})$, since $\nabla(L_a\circ f)(\tilde{p})=(0,\ldots,0,-\varepsilon)$, then $\tilde{p}$ is the solution of the system of linear equations:
\begin{equation}\label{sistema}
\left\{\renewcommand{\arraystretch}{2}{\begin{array}{ll}
\ffrac{\partial (L_a\circ f)}{\partial x_j}(x)=0, & j=1,\ldots, n-k-1;\\
\ffrac{\partial (L_a\circ f)}{\partial x_{n-k}}(x)=-\varepsilon.
\end{array}}\right.
\end{equation}
By equations (\ref{Laemakmaisum}) and (\ref{Laemak}), we have \begin{equation}\label{derivparciais}\ffrac{\partial(L_a\circ f)}{\partial x_j}(x)=\left\{\renewcommand{\arraystretch}{2.3}{\begin{array}{cl}
-2x_j+\ffrac{\partial h}{\partial x_j}(x), & \ j=1,\ldots, \lambda;\\
2x_j+\ffrac{\partial h}{\partial x_j}(x), & \ j=\lambda+1,\ldots,n-k-1;\\
\ffrac{\partial h}{\partial x_{n-k}}(x), & \ j=n-k.
\end{array}}\right.
\end{equation} Clearly, by equation (\ref{Laemak}), $h(x_1,\ldots, x_{n-k-1},0)=0, \, \forall (x_1,\ldots, x_{n-k-1},0)\in\R^{n-k}.$
Hence $h(\vec{0})=0$ and, since $\vec{0}$ is a critical point of $L_a\circ f|_{\overline{A_{k}(f)}}$, equation (\ref{derivparciais}) implies that $\ffrac{\partial h}{\partial x_i}(\vec{0})$ vanishes for all $i=1,\ldots,n-k.$

Since $h(\vec{0})=0$, by Hadamard's Lemma, there exist smooth functions $h_i$, $i=1,\ldots, n-k$, defined in an open neighborhood of $\vec{0}$ in $\R^{n-k}$, such that $h(x)=\sum_{i=1}^{n-k}{x_ih_i(x)}$, for $j=1,\ldots, n-k$. It follows that, 
\begin{equation}\label{partialh}\begin{array}{c}
\ffrac{\partial h}{\partial x_j}(x)=h_j(x)+\displaystyle\sum_{i=1}^{n-k}{x_i\ffrac{\partial h_i}{\partial x_j}(x)}\Rightarrow\ffrac{\partial h}{\partial x_j}(\vec{0})=h_j(\vec{0}), \ j=1,\ldots, n-k. 
\end{array}
\end{equation} Hence, $h_j(\vec{0})=0$ for all $j=1,\ldots, n-k$ and also by Hadamard's Lemma, there exist smooth functions $l_{i,j}$, $i,j=1, \ldots, n-k$, defined in a open neighborhood of $\vec{0}$ in $\R^{n-k}$, such that \begin{equation}\label{hj} h_j(x)=\displaystyle\sum^{n-k}_{i=1}{x_il_{i,j}(x)}.\end{equation} Replacing relations (\ref{partialh}) and (\ref{hj}) in (\ref{derivparciais}), the partial derivatives become
\small
\begin{equation}\label{parciais}\ffrac{\partial(L_a\circ f)}{\partial x_j}(x)=\left\{\setlength{\arraycolsep}{0.1cm}{\renewcommand{\arraystretch}{2.5}{\begin{array}{rl}
-2x_j+ \displaystyle\sum_{i=1}^{n-k}{x_i\left(l_{i,j}(x)+\ffrac{\partial h_i}{\partial x_j}(x)\right)}, & j=1,\ldots, \lambda;\\
2x_j+\displaystyle\sum_{i=1}^{n-k}{x_i\left(l_{i,j}(x)+\ffrac{\partial h_i}{\partial x_j}(x)\right)}, & j=\lambda+1,\ldots,n-k-1;\\
\displaystyle\sum_{i=1}^{n-k}{x_i\left(l_{i,n-k}(x)+\ffrac{\partial h_i}{\partial x_{n-k}}(x)\right)}, & j=n-k.
\end{array}}}\right.
\end{equation}\normalsize 

Let $A(x)=\left(a_{i,j}(x)\right)$ be the matrix whose entries $a_{i,j}(x)$ are given by the coefficients of the monomials $x_j$ in the expression $\ffrac{\partial (L_a\circ f)}{\partial x_i}(x)$, for $i,j=1, \ldots, n-k$, that is: \begin{equation}\label{coeficientes}\ffrac{\partial (L_a\circ f)}{\partial x_i}(x)=a_{i,1}(x)x_1+\ldots+a_{i,n-k}(x)x_{n-k}, \ i=1, \ldots, n-k.\end{equation} Therefore, by equations (\ref{parciais}) and (\ref{coeficientes}), the coefficients $a_{i,j}(x)$ are given by:
\small \begin{equation}
a_{i,j}(x)=\left\{\setlength{\arraycolsep}{0.15cm}{\renewcommand{\arraystretch}{2.5}{\begin{array}{llll}
-2+l_{i,i}(x)&+&\ffrac{\partial h_i}{\partial x_i}(x), & i,j=1,\ldots,\lambda \text{ and } i=j;\\
2+l_{i,i}(x)&+&\ffrac{\partial h_i}{\partial x_i}(x), & i,j=\lambda+1,\ldots,n-k-1 \text{ and } i=j;\\
l_{j,i}(x)&+&\ffrac{\partial h_j}{\partial x_i}(x), & i,j=1,\ldots,n-k \text{ and } i\neq j;\\
l_{n-k,n-k}(x)&+&\ffrac{\partial h_{n-k}}{\partial x_{n-k}}(x), & i=j=n-k,
\end{array}}}\right.
\end{equation} \normalsize and the system of linear equations (\ref{sistema}) can be expressed in the matrix form \begin{equation}\label{sistemaA} 
A(x)\left(\begin{array}{c}
x_1\\
\vdots\\
x_{n-k-1}\\
x_{n-k}
\end{array}\right)=\left(\begin{array}{c}
0\\
\vdots\\
0\\
-\varepsilon
\end{array}\right).
\end{equation}

Let $A_{n-k}(x)$ be the matrix obtained by replacing the last column of $A(x)$ by the vector $(0,\ldots,0,-\varepsilon)$. Since $\tilde{p}$ is a solution of the system (\ref{sistemaA}), if $\det\left[A(\tilde{p})\right]\neq0$ then \begin{equation}\label{sinalxnk}
x_{n-k}(\tilde{p})= \ffrac{\det\left[A_{n-k}(\tilde{p})\right]}{\det\left[A(\tilde{p})\right]}.
\end{equation} Thus, if the relation (\ref{sinalxnk}) is true and $\det\left[A_{n-k}(\tilde{p})\right]\neq0$, then we obtain 
\begin{equation}
\s x_{n-k}(\tilde{p})=\s\det\left[A_{n-k}(\tilde{p})\right] \s\det\left[A(\tilde{p})\right].
\end{equation} In order to verify that $\det\left[A(\tilde{p})\right]\neq0$, first we calculate the matrix $A(\vec{0})$.

By (\ref{hj}), for $i,j=1, \ldots,n-k$, we have 
\begin{equation}
\ffrac{\partial h_j}{\partial x_i}(x)=l_{i,j}(x)+\displaystyle\sum_{r=1}^{n-k}x_r\ffrac{\partial l_{r,j}}{\partial x_i}(x),
\end{equation} so that $\ffrac{\partial h_j}{\partial x_i}(\vec{0})=l_{i,j}(\vec{0})$, for all $i,j=1,\ldots, n-k$. Then, evaluating the matrix $A(x)=(a_{i,j}(x))$ at $\vec{0}\in\R^{n-k}$, we obtain
\begin{equation}\label{aijzero}
a_{i,j}(\vec{0})=\left\{\renewcommand{\arraystretch}{2.5}{\begin{array}{ll}
-2+2\ffrac{\partial h_i}{\partial x_i}(\vec{0}), & \ i,j=1,\ldots,\lambda \text{ and } i=j;\\
2+2\ffrac{\partial h_i}{\partial x_i}(\vec{0}), & \ i,j=\lambda+1,\ldots,n-k-1 \text{ and } i=j;\\
\ffrac{\partial h_i}{\partial x_j}(\vec{0})+\ffrac{\partial h_j}{\partial x_i}(\vec{0}), & \ i,j=1,\ldots,n-k \text{ and } i\neq j;\\
2\ffrac{\partial h_{n-k}}{\partial x_{n-k}}(\vec{0}), & \ i=j=n-k.
\end{array}}\right.
\end{equation}



Let us show that $A(\vec{0})=Hess(L_a\circ f|_{\overline{A_k(f)}},\vec{0})$. Differentiating once more $L_a\circ f(x)$ in (\ref{derivparciais}), we obtain: for $i,j=1,\ldots, \lambda$,
\begin{equation*}\ffrac{\partial^2(L_a\circ f)}{\partial x_i\partial x_j}(x)=\left\{\setlength{\arraycolsep}{0.12cm}{\renewcommand{\arraystretch}{2.5}{\begin{array}{ll}
-2+\ffrac{\partial^2 h}{\partial x_j^2}(x), & i=j;\\
\ffrac{\partial^2 h}{\partial x_i\partial x_j}(x), & i\neq j;
\end{array}}}\right.
\end{equation*} 
for $i,j= \lambda+1,\ldots,n-k-1$, \begin{equation*}\ffrac{\partial^2(L_a\circ f)}{\partial x_i\partial x_j}(x)=\left\{\setlength{\arraycolsep}{0.12cm}{\renewcommand{\arraystretch}{2.5}{\begin{array}{ll}
2+\ffrac{\partial^2 h}{\partial x_j^2}(x), & i=j;\\
\ffrac{\partial^2 h}{\partial x_i\partial x_j}(x), & i\neq j;
\end{array}}}\right.
\end{equation*} 
and for $j=1,\ldots,\lambda$ and $i= \lambda+1,\ldots,n-k,$ $$\ffrac{\partial^2(L_a\circ f)}{\partial x_i\partial x_j}(x)=\ffrac{\partial^2 h}{\partial x_i\partial x_j}(x).$$ 
Finally, for $i= \lambda+1,\ldots,n-k$, $$\ffrac{\partial^2(L_a\circ f)}{\partial x_i\partial x_{n-k}}(x)=\ffrac{\partial^2 h}{\partial x_i\partial x_{n-k}}(x).$$

Differentiating $\ffrac{\partial h}{\partial x_j}(x)$ in (\ref{partialh}), for $i,j=1,\ldots,n-k$, we have
\begin{equation*}\setlength{\arraycolsep}{0.12cm}{\renewcommand{\arraystretch}{2.5}{\begin{array}{ccl}
\ffrac{\partial^2 h}{\partial x_j^2}(x)&=&2\ffrac{\partial h_j}{\partial x_j}(x)+\displaystyle\sum_{r=1}^{n-k}x_r\ffrac{\partial^2 h_r}{\partial x_j^2}(x);\\
\ffrac{\partial^2 h}{\partial x_i\partial x_j}(x)&=&\ffrac{\partial h_j}{\partial x_i}(x)+\ffrac{\partial h_i}{\partial x_j}(x)+\displaystyle\sum_{r=1}^{n-k}x_r\ffrac{\partial^2 h_r}{\partial x_i\partial x_j}(x).
\end{array}}}
\end{equation*} Then, \begin{equation*}\setlength{\arraycolsep}{0.12cm}{\renewcommand{\arraystretch}{2.5}{\begin{array}{ccl}
\ffrac{\partial^2 h}{\partial x_j^2}(\vec{0})&=&2\ffrac{\partial h_j}{\partial x_j}(\vec{0}),\\
\ffrac{\partial^2 h}{\partial x_i\partial x_j}(\vec{0})&=&\ffrac{\partial h_j}{\partial x_i}(\vec{0})+\ffrac{\partial h_i}{\partial x_j}(\vec{0}).
\end{array}}}
\end{equation*}

Evaluating $\ffrac{\partial^2(L_a\circ f)}{\partial x_i\partial x_j}(x)$ at $\vec{0}\in\R^{n-k}$, we verify that: for $i,j=1,\ldots,\lambda$,\begin{equation*}\ffrac{\partial^2(L_a\circ f)}{\partial x_i\partial x_j}(\vec{0})=\left\{\setlength{\arraycolsep}{0.12cm}{\renewcommand{\arraystretch}{2.5}{\begin{array}{ll}
-2+2\ffrac{\partial h_j}{\partial x_j}(\vec{0}), & i=j;\\
\ffrac{\partial h_j}{\partial x_i}(\vec{0})+\ffrac{\partial h_i}{\partial x_j}(\vec{0}), & i\neq j;
\end{array}}}\right.
\end{equation*} 
for $i,j=\lambda+1,\ldots,n-k-1$, \begin{equation*}\ffrac{\partial^2(L_a\circ f)}{\partial x_i\partial x_j}(\vec{0})=\left\{\setlength{\arraycolsep}{0.12cm}{\renewcommand{\arraystretch}{2.5}{\begin{array}{ll}
2+2\ffrac{\partial h_j}{\partial x_j}(\vec{0}), & i=j;\\
\ffrac{\partial h_j}{\partial x_i}(\vec{0})+\ffrac{\partial h_i}{\partial x_j}(\vec{0}), & i\neq j;
\end{array}}}\right.
\end{equation*} 
and for $j=1,\ldots,\lambda$ and $i=\lambda+1,\ldots,n-k$, $$\ffrac{\partial^2(L_a\circ f)}{\partial x_i\partial x_{i}}(\vec{0})=\ffrac{\partial h_j}{\partial x_i}(\vec{0})+\ffrac{\partial h_i}{\partial x_j}(\vec{0}).$$ 
Finally, for $i= \lambda+1, \ldots, n-k$ $$\ffrac{\partial^2(L_a\circ f)}{\partial x_i \partial x_{n-k}}(\vec{0})=\ffrac{\partial h_{n-k}}{\partial x_i}(\vec{0})+\ffrac{\partial h_i}{\partial x_{n-k}}(\vec{0}).$$

Therefore, comparing these second-order derivatives with coefficients (\ref{aijzero}) we conclude that $a_{i,j}(\vec{0})=\ffrac{\partial^2(L_a\circ f)}{\partial x_i\partial x_j}(\vec{0})$ for all $i,j=1, \ldots,n-k$, that is, $Hess(L_a\circ f)(\vec{0})=A(\vec{0}).$

Remember that $p$ is a non-degenerate critical point of $L_a\circ f|_{\overline{A_k(f)}}$, then we have that $\det[Hess(L_a\circ f)(\vec{0})]\neq0.$ Therefore, $\det[A(\vec{0})]\neq0$.

Let us consider the neighborhood of $\vec{0}\in\R^{n-k}$ small enough such that \begin{equation}\label{sinalAzero}\s\det[A(\tilde{p})]= \s\det[A(\vec{0})].\end{equation}


Since $\overline{A_k(f)}=\overline{A_k^+(f)}\cup\overline{A_k^-(f)}$ and $\partial\overline{A_k^+(f)}=\partial\overline{A_k^-(f)}=\partial\overline{A_{k+1}(f)}$, we may take $x_1, \ldots, x_{n-k-1}$ as local coordinates of $\overline{A_{k+1}(f)}$, such that the Hessian matrix of the function $L_a\circ f|_{\overline{A_{k+1}(f)}}$ at $\vec{0}$ is the submatrix of $Hess(L_a\circ f)(\vec{0})$ obtained by removing the last line and the last column of $Hess(L_a\circ f)(\vec{0})$. In particular, the Hessian matrix $Hess(L_a\circ f|_{\overline{A_{k+1}(f)}})(\vec{0})$ is also non-singular, because $p$ is a non-degenerate critical point of $L_a\circ f|_{\overline{A_{k+1}(f)}}$ (see Lemma \ref{Laproperties}, item 3). Then, \begin{equation}\label{hessakmaisum}\det[ A_{n-k}(\vec{0})]=-\varepsilon\det[Hess(L_a\circ f|_{\overline{A_{k+1}}})(\vec{0})]\neq0,\end{equation} and again, we may assume that \begin{equation}\label{sinalAnmenosk}\s\det[A_{n-k}(\tilde{p})]= \s\det[A_{n-k}(0)].\end{equation}

Thereby, equation (\ref{sinalxnk}) implies that $x_{n-k}(\tilde{p})\neq0$. Therefore $\tilde{p}\notin\overline{A_{k+1}(f)}$, that is, $\tilde{p}\in A_{k}^+(f)\cup A_{k}^-(f)$. 

\item[\textit{3.}] We want to verify under which conditions $\tilde{p}\in A_k^+(f)$ or $\tilde{p}\in A_k^-(f)$ and how it interferes in the calculus of $\chi\left(\overline{A_k^{+}(f)}\right)-\chi\left(\overline{A_k^{-}(f)}\right)$. To do this, it is enough to study the sign of $x_{n-k}(\tilde{p})$. By previous item's proof, we have \begin{equation}\label{sinal}\renewcommand{\arraystretch}{1.5}{\begin{array}{llll}
\s x_{n-k}(\tilde{p})&=& &\s\det[A_{n-k}(\vec{0})].\s\det[A(\vec{0})]\\
&=&-&\s\det[Hess(L_a\circ f|_{\overline{A_{k+1}(f)}})(\vec{0})].\s\det[Hess(L_a\circ f)(\vec{0})]\\
&=&-&\s\det[Hess(\widetilde{L_a\circ f}|_{\overline{A_{k+1}(f)}})(\vec{0})].\s\det[Hess(\widetilde{L_a\circ f})(\tilde{p})]\\
&=&&(-1)^{Ind(\widetilde{L_a\circ f}|_{A_{k+1}(f)},\vec{0})+1}(-1)^{Ind(\widetilde{L_a\circ f},\tilde{p})}
\end{array}}
\end{equation} 

We have seen that $\nabla(\widetilde{L_a\circ f})(\vec{0})=(0,\ldots,0,\varepsilon)$ which is equal to $\varepsilon\nabla\{x_{n-k}\}(\vec{0})$, with $\varepsilon>0.$ Thus, the vector $\nabla(\widetilde{L_a\circ f})(\vec{0})$ is pointing inwards in the half-space determined by $x_{n-k}>0$. Since $p\in A_{k+1}(f)$, there exist local coordinates $(u_1,\ldots, u_m)$ around $p$ in $M$ and local coordinates $(z_1,\ldots, z_n)$ around $f(p)$ in $\R^n$ such that, locally, $f$ has the normal form:  \begin{equation}\label{formalocalak+1}
\begin{array}{l}
 z_i\circ f=u_i \text{, for } i\leq n-1,\\
 z_n\circ f=u_n^{k+2}+\displaystyle\sum_{i=1}^{k}u_iu_n^{k+1-i}+ u_{n+1}^2+\ldots+ u_{n+\mu-1}^2-u_{n+\mu}^2-\ldots-u_m^2.
\end{array}
\end{equation} By Morin's characterization of $A_k$ singularities (see \cite{Morin2}, \cite[p. 342]{Fukuda}), in the considered neighborhood, we may write \small
\begin{equation*}\renewcommand{\arraystretch}{2.7}{\setlength{\arraycolsep}{0.02cm}{\begin{array}{cl}
A_k(f)&=\left\{\ffrac{\partial^{j}(z_n\circ f)}{\partial u_n^j}=0, j=1,\ldots,k;\ffrac{\partial(z_n\circ f)}{\partial u_{i}}=0, i=n+1,\ldots,m; \ffrac{\partial^{k+1}(z_n\circ f)}{\partial u_n^{k+1}}\neq0\right\};\\
\overline{A_{k}(f)}&=\left\{\ffrac{\partial^{j}(z_n\circ f)}{\partial u_n^j}=0, j=1, \ldots,k+1;\ffrac{\partial (z_n\circ f)}{\partial u_{i}}=0, i=n+1,\ldots,m; \right\};
\end{array}}}
\end{equation*} \normalsize so that $$\begin{array}{ccl}
A_k(f)&=&\left\{u_r-c_ru_n^{r+1}=0, r=1,...,k; u_{n+1}=\ldots=u_m=0; u_n\neq0\right\};\\
\overline{A_k(f)}&=&\left\{u_r-c_ru_n^{r+1}=0, r=1,...,k; u_{n+1}=\ldots=u_m=0;\right\};\\
\end{array}$$ where $c_r\in\R$ are constants. Thus, by the Implicit Function Theorem, $u_{k+1}, \ldots, u_n$ are local coordinates in $\overline{A_k(f)}$.

If $k$ and $m-n>0$ are odd, in \cite[Proposition 6.1, p. 188]{Dutertrefukui}, N. Dutertre and T. Fukui have shown a characterization for the manifolds $A_{k}^+(f)$ and $A_{k}^-(f)$ given in terms of the parity of the Morse index $\mu$ of the quadratic part of $z_n\circ f$. According to this characterization, we consider the following cases:

\begin{itemize}
\item If $\mu$ is even, then: $q\in A_{k}^+(f)\Leftrightarrow u_{n}(q)>0$ and $q\in A_{k}^-(f)\Leftrightarrow u_{n}(q)<0$;

\item If $\mu$ is even, then: $q\in A_{k}^+(f)\Leftrightarrow u_{n}(q)<0$ and $q\in A_{k}^-(f)\Leftrightarrow u_{n}(q)>0$;
\end{itemize} where $q=(u_1(q),...,u_{m}(q))$ is a point near $p$.
Note that we have two local coordinates systems of $\overline{A_k(f)}$ around $p$ given by $(x_1, \ldots, x_{n-k})$ and $(u_{k+1}, \ldots, u_n)$, such that the hyperplanes $x_{n-k}=0$ and $u_n=0$ are equal. Let us suppose, without loss of generality, that the half-space determined by $x_{n-k}>0$ corresponds to the half-space determined by $u_n>0$. Then, we have:
\begin{itemize}
\item If $\mu$ is even, then: $q\in A_{k}^+(f)\Leftrightarrow x_{n-k}(q)>0$ and $q\in A_{k}^-(f)\Leftrightarrow x_{n-k}(q)<0;$
\item If $\mu$ is odd, then: $q\in A_{k}^+(f)\Leftrightarrow x_{n-k}(q)<0$ and $q\in A_{k}^-(f)\Leftrightarrow x_{n-k}(q)>0;$
\end{itemize} where $q=(x_1(q),...,x_{m}(q))$ is a point near $p$.\\

Finally, let us see that the critical points $p$ and $\tilde{p}$ do not interfere in the calculus of $\chi\left(\overline{A_k^{+}(f)}\right)-\chi\left(\overline{A_k^{-}(f)}\right)$.\\ 

If $\mu$ is even, then $q\in A_{k}^+(f)\Leftrightarrow x_{n-k}(q)>0$, that is, $\{x_{n-k}>0\}=A_k^+(f).$ Then, $\nabla(\widetilde{L_a\circ f})(\vec{0})$ is pointing inwards $A_k^+(f)$. 
\begin{itemize}
\item Suppose that $\tilde{p}\in A_k^+(f)$. Then $x_{n-k}(\tilde{p})>0$ and, by equation (\ref{chiequation}), the contribution of $p$ and $\tilde{p}$ in the computation of $\small{\chi(\overline{A_k^{+}(f)})-\chi(\overline{A_k^{-}(f)})}$ is $$(-1)^{Ind(\widetilde{L_a\circ f},\tilde{p})}+(-1)^{Ind(\widetilde{L_a\circ f}|_{A_{k+1}(f)},\vec{0})}=0$$ because equation (\ref{sinal}) tell us that the respective Morse indices have opposite parities, since $x_{n-k}(\tilde{p})>0$.  
\item Suppose that $\tilde{p}\in A_k^-(f)$. Then, similarly, $x_{n-k}(\tilde{p})<0$ and, by equation (\ref{chiequation}), the contribution of $p$ and $\tilde{p}$ in the computation of $\small{\chi(\overline{A_k^{+}(f)})-\chi(\overline{A_k^{-}(f)})}$ is $$(-1)^{Ind(\widetilde{L_a\circ f}|_{A_{k+1}(f)},\vec{0})}-(-1)^{Ind(\widetilde{L_a\circ f},\tilde{p})}=0.$$
\end{itemize}

If $\mu$ is odd, then $q\in A_{k}^+(f)\Leftrightarrow x_{n-k}(q)<0$, that is, $\{x_{n-k}>0\}=A_k^-(f).$ Then, $\nabla(\widetilde{L_a\circ f})(\vec{0})$ is pointing inwards $A_k^-(f)$ and, with the argument of the previous computation, we have that the contribution of $p$ and $\tilde{p}$ in the computation of $\small{\chi(\overline{A_k^{+}(f)})-\chi(\overline{A_k^{-}(f)})}$ is zero whether $\tilde{p}\in A_k^+(f)$ or $\tilde{p}\in A_k^-(f)$.


Therefore the critical points $p\in A_{k+1}(f)$ and $\tilde{p}\in A_k^+(f)\cup A_k^-(f)$ do not interfere in the global computation of $\chi\left(\overline{A_k^{+}(f)}\right)-\chi\left(\overline{A_k^{-}(f)}\right)$, since the indices of the perturbation $\widetilde{L_a\circ f}$ in this singularities cancel each other out.\end{proof}


\section{A new proof of Dutertre-Fukui's theorem}\label{newproof}

Let us state the following notation around each singular point of $f$. Since $f$ is a Morin map, we know that around a singular point $p\in M$ of type $A_k$, $f$ has the form given by (\ref{formalocalcap2}). We denote by $\psi=(\psi_1,\ldots,\psi_n)$ the inverse mapping of the diffeomorphism $y$, that is $\psi:=y^{-1}$, and by $g=(g_1,\ldots,g_n)$ the composite mapping $y\circ f\circ x^{-1}$. Hence, the coordinate functions of $g$ are given by $g_j(x)=y_j\circ f(x)$, for $j=1,\ldots,n$ and $L_a\circ f(x)$ can be described as \begin{equation*}
L_a\circ f(x)=\displaystyle\sum_{i=1}^{n}{a_if_i(x)}=\displaystyle\sum_{i=1}^{n}{a_i(\psi_i\circ g)(x)}.
\end{equation*}

\begin{teo}\cite[Theorem 6.2, p. 188]{Dutertrefukui}
Let $f:M\rightarrow\R^n$ be a Morin map defined on a compact manifold $m$-dimensional $M$, with $m-n>0$ odd. Then, $$\chi(M)=\sum_{k: \text{ odd}}{\left[\chi(\overline{A_k^+(f)})-\chi(\overline{A_k^-(f)})\right]}.$$
\end{teo}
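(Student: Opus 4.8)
The plan is to evaluate both sides of the identity by Morse theory and match them critical point by critical point. Fix $a\in\R^n\setminus\{\vec{0}\}$ generic in the sense of Lemma \ref{Laproperties}, so that $L_a\circ f$ is a Morse function on the closed manifold $M$ and every restriction $L_a\circ f|_{A_k(f)}$, $L_a\circ f|_{\overline{A_k(f)}}$ is Morse with Fukuda's properties. Since $M$ is closed, classical Morse theory gives $\chi(M)=\sum_{p\in C(L_a\circ f)}(-1)^{Ind(L_a\circ f,p)}$. First I would show, by a dimension count on the local Morin form (\ref{formalocalcap2}), that for generic $a$ \emph{every} critical point of $L_a\circ f$ on $M$ lies on the fold locus $A_1(f)$: a point $p\in A_k(f)$ must satisfy $(k-1)+(n-k)=n-1$ independent conditions in order to be critical for $L_a\circ f$, and for $k\geq 2$ this exceeds $\dim A_k(f)=n-k$. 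Moreover on $A_1(f)$ the condition defining $C(L_a\circ f|_{A_1(f)})$ coincides with the vanishing of the full differential, so $C(L_a\circ f)=C(L_a\circ f|_{A_1(f)})$. Combining this with Lemma \ref{indexlemma} rewrites the left-hand side as $\chi(M)=\sum_{p\in A_1^+(f)\cap C}(-1)^{Ind(L_a\circ f|_{A_1^+(f)},p)}-\sum_{p\in A_1^-(f)\cap C}(-1)^{Ind(L_a\circ f|_{A_1^-(f)},p)}$, an expression I will call $I_1$.

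On the right-hand side, for each odd $k$ I would apply the boundary Morse formula (\ref{chiequation}) to the correct Morse function obtained from $L_a\circ f$ by the perturbation of Lemma \ref{lemadaperturbacao} near each non-correct boundary point, and write $S_k:=\chi(\overline{A_k^+(f)})-\chi(\overline{A_k^-(f)})=I_k+B_k$. Here $I_k$ collects the interior critical points, each contributing $(-1)^{Ind}$ with a $+$ if it lies in $A_k^+(f)$ and a $-$ if it lies in $A_k^-(f)$, and $B_k$ collects the correct boundary critical points lying in $A_{k+2}(f)$, weighted by $\pm1$ according to whether $\nabla\widetilde{L_a\circ f}$ points into $\overline{A_k^+(f)}$ or into $\overline{A_k^-(f)}$. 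I use item 2 of Lemma \ref{Laproperties} to guarantee that the boundary points in $A_{k+2}(f)$ are correct, and item 3 of Lemma \ref{lemadaperturbacao} to discard the remaining boundary points in $A_{k+1}(f)$: each such non-correct point and its perturbation partner $\tilde p\in A_k^{\pm}(f)$ cancel inside the difference $S_k$, so that after this bookkeeping $I_k$ ranges exactly over $C(L_a\circ f|_{A_k(f)})$ and $B_k$ only over $A_{k+2}(f)$.

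The core of the argument is then a telescoping identity $B_k=-I_{k+2}$ for every odd $k$, with the convention $I_{k+2}:=0$ once $k+2>n$ since $A_{k+2}(f)=\emptyset$. By Lemma \ref{lemaseparado} the boundary critical points of $S_k$ lying in $A_{k+2}(f)$ are precisely the interior critical points $C(L_a\circ f|_{A_{k+2}(f)})$ that appear in $I_{k+2}$, so this is a pointwise statement: at each $q\in A_{k+2}(f)$ its contribution to $B_k$ cancels its contribution to $I_{k+2}$. Granting it, $\sum_{k\text{ odd}}S_k=\sum_{k\text{ odd}}(I_k+B_k)=I_1$, because each $I_k$ with $k\geq 3$ is cancelled by $B_{k-2}$ while $I_1$ has no partner below it. Together with the first paragraph this gives $\sum_{k\text{ odd}}S_k=I_1=\chi(M)$, which is exactly the theorem.

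The hard part will be the pointwise telescoping identity $B_k=-I_{k+2}$, that is, showing that at $q\in A_{k+2}(f)$ the boundary index $Ind(L_a\circ f|_{\overline{A_{k+1}(f)}},q)$, the interior index $Ind(L_a\circ f|_{A_{k+2}(f)},q)$, the side into which $\nabla\widetilde{L_a\circ f}$ points, and the membership $q\in A_{k+2}^+(f)$ versus $q\in A_{k+2}^-(f)$ conspire to give opposite signs. This is the exact analogue, one stratum higher, of the sign computation carried out in item 3 of Lemma \ref{lemadaperturbacao}. I would run the same Hadamard-type expansion of $L_a\circ f$ in Morin coordinates adapted to the flag $\overline{A_{k+1}(f)}\supset A_{k+2}(f)$, relate the two indices through the sign of the Hessian in the direction normal to $A_{k+2}(f)$ inside $\overline{A_{k+1}(f)}$, and read off both the pointing side and the $A_{k+2}^{\pm}(f)$ label from the parity of $\mu$ in the normal form (\ref{formalocalak+1}) through the Dutertre--Fukui characterization recalled in Proposition \ref{PropDF}. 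Matching all of these signs simultaneously is the delicate bookkeeping on which the whole proof rests.
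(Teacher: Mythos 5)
Your proposal follows essentially the same route as the paper: the identical Morse-theoretic evaluation of $\chi(M)$ on $A_1(f)$ via Lemma \ref{indexlemma}, the same split of each $\chi(\overline{A_k^+(f)})-\chi(\overline{A_k^-(f)})$ into interior terms and correct boundary terms in $A_{k+2}(f)$ with Lemma \ref{lemadaperturbacao} cancelling the non-correct points of $A_{k+1}(f)$, and the same telescoping $B_k=-I_{k+2}$, which the paper proves through the sign identity $\s\eta(p)=-(-1)^{\overline{\lambda}^{k+1}(p)}(-1)^{\lambda^{k+2}(p)}$ obtained from the Hessian block structure (\ref{matrix}) and, for the top stratum when $n$ is odd, the analogous computation with $\xi(p)$. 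Your deferred ``hard part'' is exactly this computation, carried out in the paper by the same Hadamard-type expansion in Morin coordinates together with the Dutertre--Fukui characterizations, so the two arguments coincide in substance.
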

\begin{proof} Since $M$ is compact and $L_a\circ f$ is a Morse function, it is a well known fact from Morse theory that $$\chi(M)=\sum_{p\in C(L_a\circ f)}(-1)^{\lambda(p)},$$ where $\lambda(p)=Ind(L_a\circ f,p)$ denotes the Morse index of $L_a\circ f$ at a critical point $p$. It is not difficult to see that $C(L_a\circ f)\subset \overline{A_1(f)}$ and by Lemma \ref{Laproperties} item 2, we have that $C(L_a\circ f)\cap\overline{A_2(f)}=\emptyset$. Hence, if $p\in C(L_a\circ f)$, then $p\in A_1(f)= A_1^+(f)\cup A_1^-(f)$ and Lemma \ref{indexlemma} states that \begin{equation}\label{index} Ind(L_a\circ f,p)\equiv\left\{\begin{array}{llll}
 Ind(L_a\circ f|_{A_1^+(f)},p) & \mod2, & \text{ if } p\in A_1^+(f),\\
 1 + Ind(L_a\circ f|_{A_1^-(f)},p) & \mod2, & \text{ if } p\in A_1^-(f).
\end{array}\right.\end{equation} Let us denote $Ind(L_a\circ f|_{A_1(f)},p)$ by $\lambda^1(p)$ and $C(L_a\circ f)$ by $C$, then \begin{equation}\label{ladoesquerdo}\renewcommand{\arraystretch}{2}{
\begin{array}{ccl}
\chi(M)=\displaystyle\sum_{p\in C}(-1)^{\lambda(p)}&=&\displaystyle\sum_{p\in A_1^+(f)\cap C}(-1)^{\lambda(p)}+\displaystyle\sum_{p\in  A_1^-(f)\cap C}(-1)^{\lambda(p)}\\
&=&\displaystyle\sum_{p\in A_1^+(f)\cap C}(-1)^{\lambda^1(p)}+\displaystyle\sum_{p\in A_1^-(f)\cap C}(-1)^{\lambda^1(p)+1}\\
&=&\displaystyle\sum_{p\in A_1^+(f)\cap C}(-1)^{\lambda^1(p)}-\displaystyle\sum_{p\in A_1^-(f)\cap C}(-1)^{\lambda^1(p)}.\\
\end{array}}
\end{equation}

Regarding the sum $$\displaystyle\sum_{k: \text{ odd}}{\left[\chi(\overline{A_k^+(f)})-\chi(\overline{A_k^-(f)})\right]},$$ we have seen in the previous section that the critical points in $C(L_a\circ f|_{\overline{A_{k+1}(f)}})\cap{A_{k+1}(f)}$ are not correct critical points of $L_a\circ f|_{\overline{A_{k}(f)}}$. Hence, we consider a partition of unity composed by $L_a\circ f|_{\overline{A_k(f)}}$ and by the perturbations $\widetilde{L_a\circ f}$ defined in the neighborhoods of the non-correct critical points, in order to obtain a correct Morse function defined on $M$ and to apply Theorem \ref{teobordo}.

For clearer notations, let us consider $C(k):=C(L_a\circ f|_{A_{k}(f)})$, $C(\overline{k}):=C(L_a\circ f|_{\overline{A_{k}(f)}})$, $C(k^+):=C(L_a\circ f|_{A_{k}^+(f)})$ and $C(k^-):=C(L_a\circ f|_{A_{k}^-(f)})$. Moreover, we will denote
\begin{equation}\label{notacaogradientes}\setlength{\arraycolsep}{0.1cm}{\begin{array}{ccl}
\nabla(\overline{k})(p)^{\wedge}&:=&\nabla(L_a\circ f|_{\overline{A_{k}(f)}})(p)\text{ ``is pointing outwards'' };\\
\nabla(\overline{k})(p)^{\vee}&:=&\nabla(L_a\circ f|_{\overline{A_{k}(f)}})(p)\text{ ``is pointing inwards'' }.\\
\end{array}}
\end{equation} 
For instance, $\nabla(\overline{k})(p)^{\wedge} A_{k}^+(f)$ means that the gradient vector of the map $L_a\circ f|_{\overline{A_{k}(f)}}$ at the point $p$ is pointing outwards the manifold $A_{k}^+(f)$. 

By Theorem \ref{teobordo} and Lemma \ref{lemadaperturbacao}, we have
\begin{equation*}\label{chik}\renewcommand{\arraystretch}{1.8}{\setlength{\arraycolsep}{0.1cm}{\begin{array}{cclcl}
\chi(\overline{A_{k}^+(f)})&=&\displaystyle\sum_{p\in C(k^+)}{(-1)^{\lambda^k(p)}}&+&\displaystyle\sum_{\scriptsize\begin{array}{c}p\in C(\overline{k+1})\cap A_{k+2}(f),\\ \nabla(\overline{k})(p)^{\vee} A_{k}^+(f)\end{array}\normalsize}{(-1)^{\overline{\lambda}^{k+1}(p)}};\\
\chi(\overline{A_{k}^-(f)})&=&\displaystyle\sum_{p\in C(k^-)}{(-1)^{\lambda^k(p)}}&+&\displaystyle\sum_{\scriptsize\begin{array}{c}p\in C(\overline{k+1})\cap A_{k+2}(f),\\ \nabla(\overline{k})(p)^{\vee} A_{k}^-(f)\end{array}\normalsize}{(-1)^{\overline{\lambda}^{k+1}(p)}};
\end{array}}}
\end{equation*} where $\lambda^k(p)$ is the Morse index of $L_a\circ f|_{A_k(f)}$ at $p$ and $\overline{\lambda}^{k+1}(p)$ is the Morse index of $L_a\circ f|_{\overline{A_{k+1}(f)}}$ at $p$. 
Similarly, 
\begin{equation*}\label{chik2}\renewcommand{\arraystretch}{1.8}{\setlength{\arraycolsep}{0.1cm}{\begin{array}{cclcl}
\chi(\overline{A_{k+2}^+(f)})&=&\displaystyle\sum_{p\in C({k+2}^+)}{(-1)^{\lambda^{k+2}(p)}}&+&\displaystyle\sum_{\scriptsize\begin{array}{c}p\in C(\overline{k+3})\cap A_{k+4}(f),\\ \nabla(\overline{k+2})(p)^{\vee} A_{k+2}^+(f)\end{array}\normalsize}{(-1)^{\overline{\lambda}^{k+3}(p)}};\\
\chi(\overline{A_{k+2}^-(f)})&=&\displaystyle\sum_{p\in C({k+2}^-)}{(-1)^{\lambda^{k+2}(p)}}&+&\displaystyle\sum_{\scriptsize\begin{array}{c}p\in C(\overline{k+3})\cap A_{k+4}(f),\\ \nabla(\overline{k+2})(p)^{\vee} A_{k+2}^-(f)\end{array}\normalsize}{(-1)^{\overline{\lambda}^{k+3}(p)}}.
\end{array}}}
\end{equation*}

In order to calculate the sum $\chi(\overline{A_{k}^+(f)})-\chi(\overline{A_{k}^-(f)})+\chi(\overline{A_{k+2}^+(f)})-\chi(\overline{A_{k+2}^-(f)})$ we will compare the parities of the Morse indices ${\overline{\lambda}^{k+1}(p)}$ and ${\lambda^{k+2}(p)}$, where $p\in A_{k+2}(f)$ is a critical point of $L_a\circ f|_{\overline{A_{k+1}(f)}}$. To do this, we consider the cases where $\nabla(L_a\circ f|_{\overline{A_{k}(f)}})(p)$ is pointing inwards $A_{k}^+(f)$ and $\nabla(L_a\circ f|_{\overline{A_{k}(f)}})(p)$ is pointing inwards $A_{k}^-(f)$.\\


If $p\in A_{k+2}(f)$, then there are local coordinates systems $(x_1, \ldots, x_m)$ around $p$ and $(y_1, \ldots, y_n)$ around $f(p)$ such that 
\begin{equation}\label{formalocalak+2}
\begin{array}{l}
 y_i\circ f=x_i \text{, for } i\leq n-1,\\
 y_n\circ f=x_n^{k+3}+\displaystyle\sum_{i=1}^{k+1}x_ix_n^{k+2-i}+ x_{n+1}^2+\ldots+ x_{n+\lambda-1}^2-x_{n+\lambda}^2-\ldots-x_m^2.
\end{array}
\end{equation} Let us denote by $\gamma$ the function $y_n\circ f$. In a neighborhood of $p$, we have \begin{equation}\label{equacoesdosakscap2}\renewcommand{\arraystretch}{2.5}{\setlength{\arraycolsep}{0.1cm}{\begin{array}{ccl}
A_k(f)&=&\{x_{n+1}=\ldots=x_m=0;\ffrac{\partial^{j}\gamma}{\partial x_n^{j}}=0, j=1,\ldots,k; \ffrac{\partial^{k+1}\gamma}{\partial x_n^{k+1}}\neq0\};\\
\overline{A_k(f)}&=&\{x_{n+1}=\ldots=x_m=0;\ffrac{\partial^{j}\gamma}{\partial x_n^{j}}=0, j=1,\ldots,k\}.
\end{array}}}\end{equation}

In particular, around $p$, on the manifold $\overline{A_{k+1}(f)}$ we have $x_{n+1}=\ldots=x_m=0$ and for $j=1,\ldots,k+1$, \begin{equation*}
\ffrac{\partial^{j}\gamma}{\partial x_n^{j}}=0\Rightarrow \ffrac{(k+3)!}{(k+3-j)!}x_n^{k+3-j}+\displaystyle\sum_{i=1}^{k+2-j}{\ffrac{(k+2-i)!}{(k+2-j-i)!}x_ix_n^{k+2-j-i}}=0,
\end{equation*} then, for $j=1,\ldots,k+1$, the following holds
\begin{equation}\label{xrs}
x_{k+2-j}=-\binom{k+3}{j}x_n^{k+3-j}-\displaystyle\sum_{i=1}^{k+1-j}{\binom{k+2-i}{j}x_ix_n^{k+2-j-i}}.
\end{equation} 
Applying relations (\ref{xrs}) for $j=1,\ldots,k+1$, on $\overline{A_{k+1}(f)}$ we have
\begin{equation}\label{xrsak+1}
x_r=c_rx_n^{r+1}, \ \ r=1,\ldots, k+1,
\end{equation} for constants $c_r\in\R$. 
Thus, we consider $x_{k+2}, \ldots, x_n$ as local coordinates in $\overline{A_{k+1}(f)}$.

From Lemma \ref{Laproperties}, since $p\in A_{k+2}(f)$, then $p\notin C(L_a\circ f|_{\overline{A_k(f)}})$. 
Moreover, from Lemma \ref{lemaseparado}, if $p\in C(L_a\circ f|_{A_{k+2}(f)})$, then $p\in C(L_a\circ f|_{\overline{A_{k+1}(f)}})$. Therefore, the critical points of $L_a\circ f|_{A_{k+2}(f)}$ are correct critical points of $L_a\circ f|_{\overline{A_k(f)}}$. In particular, we have that $\nabla(L_a\circ f|_{\overline{A_k(f)}})(p)\neq\vec{0}$ and there exists $\eta(p)\in\R\setminus\{0\}$ such that $$\nabla(L_a\circ f|_{\overline{A_k(f)}})(p)=\eta(p).\nabla\left(\frac{\partial^{k+1}\gamma}{\partial x_n^{k+1}}\right)(p).$$ Thus, by the characterization of $A_{k+2}^+(f)$ and $A_{k+2}^-(f)$ from \cite[p.186]{Dutertrefukui} and by the characterization of $A_{k}^+(f)$ and $A_{k}^-(f)$ from \cite[Proposition 6.1, p.188]{Dutertrefukui} we have the following cases: If $p\in A_{k+2}^+(f)$, then $\lambda$ in the normal form (\ref{formalocalak+2}) is even and we have \begin{equation}\label{entraesaipar}\begin{array}{ccc}
\nabla(L_a\circ f|_{\overline{A_k(f)}})(p) \text{ is pointing inwards } A_{k}^+(f) &\Leftrightarrow& \eta(p)>0;\\
\nabla(L_a\circ f|_{\overline{A_k(f)}})(p) \text{ is pointing inwards } A_{k}^-(f)&\Leftrightarrow& \eta(p)<0.
\end{array}\end{equation} If $p\in A_{k+2}^-(f)$, then $\lambda$ in (\ref{formalocalak+2}) is odd and we have \begin{equation}\label{entraesaiimpar}\begin{array}{ccc}
\nabla(L_a\circ f|_{\overline{A_k(f)}})(p) \text{ is pointing inwards } A_{k}^+(f) &\Leftrightarrow& \eta(p)<0;\\
\nabla(L_a\circ f|_{\overline{A_k(f)}})(p) \text{ is pointing inwards } A_{k}^-(f) &\Leftrightarrow& \eta(p)>0.
\end{array}\end{equation} Since $\ffrac{\partial^{k+1}\gamma}{\partial x_n^{k+1}}(x)=\ffrac{(k+3)!}{2!}x_n^2+(k+1)!x_1$ and $x_n=0$ at $p\in A_{k+2}(f)$, then \begin{equation}\label{etapfatorial}\nabla(L_a\circ f|_{\overline{A_k(f)}})(p)=\left(\eta(p)(k+1)!,0, \ldots,0\right).
\end{equation}
Therefore, to analyse the sign of $\eta(p)$ it is enough to calculate $\ffrac{\partial(L_a\circ f)}{\partial x_1}(p)$. 

From (\ref{equacoesdosakscap2}), on $\overline{A_k(f)}$ we obtain \begin{equation}\label{xrsak}
x_{r}= c_n^{r}x_n^{r+1}+c^{r}_{1,n}x_1x_n^{r-1}, \ \ r=2,\ldots,k+1,
\end{equation} where $c_n^{r}, c^{r}_{1,n}\in\R$ are nonzero constants. Replacing (\ref{xrsak}) in the expression of $y_n\circ f(x)$, on $\overline{A_k(f)}$, the map $g(x)=(g_1(x),\ldots,g_n(x))$, defined by $g_j(x)=y_j\circ f(x)$, has its coordinate functions given by \begin{equation*}g_j(x)=\left\{
\begin{array}{ll}
x_j, & j=1,k+2,\ldots, n-1;\\
c_n^jx_n^{j+1}+c_{1,n}^{j}x_1x_n^{j-1},& j=2,\ldots,k+1;\\
c_nx_n^{k+3}+c_{1,n}x_1x_n^{k+1}, & j=n
\end{array}\right.
\end{equation*} where $c_n, c_{1,n}\in\R$ are nonzero constants. Since $L_a\circ f(x)=\displaystyle\sum_{i=1}^{n}{a_i\left(\psi_i\circ g\right)(x)}$, then \begin{equation}\label{partial1}
\ffrac{\partial(L_a\circ f)}{\partial x_1}(x)=\displaystyle\sum_{i=1}^{n}{a_i\left(\displaystyle\sum_{j=1}^{n}{\ffrac{\partial \psi_i}{\partial y_j}(g(x))\ffrac{\partial g_j}{\partial x_1}(x)}\right)}
\end{equation} and, evaluating at $p$, $\ffrac{\partial(L_a\circ f)}{\partial x_1}(p)=\sum_{i=1}^{n}{a_i\ffrac{\partial \psi_i}{\partial y_1}(g(p))}$. Thereby, by equation (\ref{etapfatorial}),  
\begin{equation}\label{relacaoeta}
\displaystyle\sum_{i=1}^{n}{a_i\ffrac{\partial \psi_i}{\partial y_1}(g(p))}=\eta(p).(k+1)!\end{equation} and we have to analyse the expression $\displaystyle\sum_{i=1}^{n}{a_i\ffrac{\partial \psi_i}{\partial y_1}(g(p))}$. To do this, let us consider some informations about the Hessian matrix of $L_a\circ f|_{\overline{A_{k+1}(f)}}$ at the point $p$. 

By relations (\ref{xrsak+1}), we know that, locally, on $\overline{A_{k+1}(f)}$, the coordinate functions of the map $g(x)=(g_1(x),\ldots,g_n(x))$ are given by 
\begin{equation}\label{gsobreAk1}
g_j(x)=\left\{\begin{array}{ll}
c_jx_n^{j+1}, & j=1,\ldots,k+1;\\
x_j, & j=k+2,\ldots, n-1;\\
c_nx_n^{k+3}, & j=n;
\end{array}\right.\end{equation} where $c_n\in\R$ is a nonzero constant obtained by replacing relations ($\ref{xrsak+1}$) in the expression of $y_n\circ f(x)$. Recall the notation $L_a\circ f=\displaystyle\sum_{i=1}^{n}{a_i\left(\psi_i\circ g\right)(x)}$ and consider its partial derivatives of order two evaluated at $p\in A_{k+2}(f)$, by relations (\ref{gsobreAk1}) and their partial derivatives, and since $x_n=0$ at $p\in A_{k+2}(f)$ it is not difficult to see that \begin{equation}\label{partialxndois}\ffrac{\partial^{2}\left(L_a\circ f\right)}{\partial x_n^2}(p)=2c_1\displaystyle\sum_{i=1}^{n}{a_i\ffrac{\partial \psi_i}{\partial y_1}(g(p))}.\end{equation}

By equations (\ref{equacoesdosakscap2}) and (\ref{xrsak+1}), we know that, around $p$, $x_{k+2}, \ldots, x_n$ are local coordinates in $\overline{A_{k+1}(f)}$ and $x_{k+2}, \ldots, x_{n-1}$ are local coordinates in $A_{k+2}(f)$. Thus, by equation (\ref{partial1}) evaluated at $p$ and (\ref{partialxndois}), the Hessian matrix of $L_a\circ f|_{\overline{A_{k+1}(f)}}$ at the point $p$ is given by \begin{equation}\label{matrix}\left[\setlength{\arraycolsep}{0.1cm}{\begin{array}{ccccc} 
\multicolumn{2}{c}{\multirow{2}{*}{$\left[\begin{array}{c}\ffrac{\partial^{(2)} (L_a\circ f)}{\partial x_s \partial x_{\l}}(p)
\end{array}\right]_{k+2\leq s,\l\leq n-1}$}} & \vdots & \multicolumn{2}{c}{\multirow{2}{*}{$O_{(n-k-2)\times1}$}}\\ 
& &\vdots & & \\
\multicolumn{2}{c}{\cdots \ \cdots \ \cdots \ \cdots \ \cdots \ \cdots \ \cdots \ \cdots} & \vdots & \multicolumn{2}{c}{\cdots \ \cdots \ \cdots \ \cdots \ \cdots} \\
\multicolumn{2}{c}{O_{1\times(n-k-2)}}& \vdots & \multicolumn{2}{c}{2c_1\ffrac{\partial(L_a\circ f)}{\partial x_1}(p)}
\end{array}}\right]\end{equation} where $O_{(n-k-2)\times1}$ and $O_{1\times(n-k-2)}$ denote null submatrices and the submatrix $$\left[\begin{array}{c}
\ffrac{\partial^{(2)} (L_a\circ f)}{\partial x_s \partial x_{\l}}(p)
\end{array}\right]_{k+2\leq s,\l\leq n-1}$$ is the Hessian matrix of $L_a\circ f|_{A_{k+2}(f)}$ evaluated at $p$. 

We can see from equation (\ref{xrsak+1}) that $c_1=-\frac{(k+3)!}{2!(k+1)!}<0$, so that $$\s\left(Hess(L_a\circ f|_{\overline{A_{k+1}}})(p)\right)=-\s\left(Hess(L_a\circ f|_{A_{k+2}})(p)\right).\s\eta(p),$$ that is, \begin{equation}\s\eta(p)=-(-1)^{\overline{\lambda}^{k+1}(p)}.(-1)^{\lambda^{k+2}(p)}.\end{equation} Thus, according to equivalences (\ref{entraesaipar}) and (\ref{entraesaiimpar}), we conclude that: If $p\in A_{k+2}^+(f)$, then $\lambda$ in the expression (\ref{formalocalak+2}) is even, and
\begin{equation*}\begin{array}{lll}
\nabla(L_a\circ f|_{\overline{A_k(f)}})(p) \text{ points inwards } A_{k}^{+}(f) \ &\Leftrightarrow& \, \eta(p)>0\\
	&\Leftrightarrow & \, \overline{\lambda}^{k+1}(p)\equiv 1+\lambda^{k+2}(p)\mod 2\\
\\
\nabla(L_a\circ f|_{\overline{A_k(f)}})(p) \text{ points inwards } A_{k}^{-}(f) \ &\Leftrightarrow& \, \eta(p)<0\\
	& \Leftrightarrow & \, \overline{\lambda}^{k+1}(p)\equiv\lambda^{k+2}(p)\mod 2
\end{array}\end{equation*} If $p\in A_{k+2}^-(f)$, then $\lambda$ in the expression (\ref{formalocalak+2}) is odd, and 
\begin{equation*}\begin{array}{lll} 
\nabla(L_a\circ f|_{\overline{A_k(f)}})(p) \text{ points inwards } A_{k}^{+}(f) \ &\Leftrightarrow& \ \eta(p)<0\\
	&\Leftrightarrow & \ \overline{\lambda}^{k+1}(p)\equiv \lambda^{k+2}(p)\mod 2\\
\\
\nabla(L_a\circ f|_{\overline{A_k(f)}})(p) \text{ points inwards } A_{k}^{-}(f) \ &\Leftrightarrow & \ \eta(p)>0\\
	&\Leftrightarrow & \ \overline{\lambda}^{k+1}(p)\equiv 1+\lambda^{k+2}(p)\mod 2
\end{array}\end{equation*}

Using these equivalences and keeping the notations (\ref{notacaogradientes}), we obtain: 

\begin{equation*}{\small\setlength{\arraycolsep}{0.1cm}{\begin{array}{cccc}
&\displaystyle\sum_{\scriptsize\begin{array}{c}p\in C(\overline{k+1})\cap A_{k+2}(f),\\ \nabla(\overline{k})(p)^{\vee}A_{k}^+(f)\end{array}}{(-1)^{\overline{\lambda}^{k+1}(p)}}&-&\displaystyle\sum_{{\scriptsize\begin{array}{c}p\in C(\overline{k+1})\cap A_{k+2}(f),\\ \nabla(\overline{k})(p)^{\vee}A_{k}^-(f)\end{array}}}{(-1)^{\overline{\lambda}^{k+1}(p)}}\\
\\
=&\displaystyle\sum_{\scriptsize\begin{array}{c}p\in C(\overline{k+1})\cap A_{k+2}^+(f),\\ \nabla(\overline{k})(p)^{\vee}A_{k}^+(f)\end{array}}{(-1)^{\overline{\lambda}^{k+1}(p)}}&+&\displaystyle\sum_{\scriptsize\begin{array}{c}p\in C(\overline{k+1})\cap A_{k+2}^-(f),\\ \nabla(\overline{k})(p)^{\vee} A_{k}^+(f)\end{array}}{(-1)^{\overline{\lambda}^{k+1}(p)}}\\
\\
-&\displaystyle\sum_{\scriptsize\begin{array}{c}p\in C(\overline{k+1})\cap A_{k+2}^+(f),\\ \nabla(\overline{k})(p)^{\vee}A_{k}^-(f)\end{array}}{(-1)^{\overline{\lambda}^{k+1}(p)}}&-&\displaystyle\sum_{\scriptsize\begin{array}{c}p\in C(\overline{k+1})\cap A_{k+2}^-(f),\\ \nabla(\overline{k})(p)^{\vee} A_{k}^-(f)\end{array}}{(-1)^{\overline{\lambda}^{k+1}(p)}}\\
\\
=&-\displaystyle\sum_{{\scriptsize\begin{array}{c}p\in C({k+2}^+),\\ \nabla(\overline{k})(p)^{\vee}A_{k}^+(f)\end{array}}}{(-1)^{\lambda^{k+2}(p)}}&+&\displaystyle\sum_{\scriptsize\begin{array}{c}p\in C({k+2}^-),\\ \nabla(\overline{k})(p)^{\vee}A_{k}^+(f)\end{array}}{(-1)^{\lambda^{k+2}(p)}}\\
\\
-&\displaystyle\sum_{{\scriptsize\begin{array}{c}p\in C({k+2}^+),\\ \nabla(\overline{k})(p)^{\vee}A_{k}^-(f)\end{array}}}{(-1)^{\lambda^{k+2}(p)}}&+&\displaystyle\sum_{{\scriptsize\begin{array}{c}p\in C({k+2}^-),\\ \nabla(\overline{k})(p)^{\vee}A_{k}^-(f)\end{array}}}{(-1)^{\lambda^{k+2}(p)}}\\
\\
=&\displaystyle\sum_{ {\scriptsize\begin{array}{c}p\in C({k+2}^-)\end{array}}}{(-1)^{\lambda^{k+2}(p)}}&-&\displaystyle\sum_{{\scriptsize\begin{array}{c}p\in C({k+2}^+)\end{array}}}{(-1)^{\lambda^{k+2}(p)}}
\end{array}}}
\end{equation*}

Thus, \begin{equation*}\renewcommand{\arraystretch}{2.2}{\begin{array}{lllllll}
\chi(\overline{A_k^+(f)})-\chi(\overline{A_k^-(f)})&=&\displaystyle\sum_{p\in C(k^+)}(-1)^{\lambda^{k}(p)}&-&\displaystyle\sum_{p\in C({k}^-)}{(-1)^{\lambda^{k}(p)}}\\
&-&\displaystyle\sum_{p\in C(k+2^+)}{(-1)^{\lambda^{k+2}(p)}}&+&\displaystyle\sum_{p\in C(k+2^-)}(-1)^{\lambda^{k+2}(p)},
\end{array}}\end{equation*} for all $k=1,\ldots, n-4$, if $n$ is odd and for all $k=1,\ldots, n-3$, if $n$ is even.

If $n$ is even, from Theorem \ref{teobordo} and from Lemma \ref{lemadaperturbacao}, we have that $$\chi(\overline{A_{n-1}^+(f)})-\chi(\overline{A_{n-1}^-(f)})=\displaystyle\sum_{p\in C(n-1^+)}(-1)^{\lambda^{n-1}(p)}-\displaystyle\sum_{p\in C(n-1^-)}(-1)^{\lambda^{n-1}(p)}.$$ Therefore, if $n$ is even, we can conclude that {\small$$\displaystyle\sum_{k: \text{ odd}}{\left[\chi(\overline{A_k^+(f)})-\chi(\overline{A_k^-(f)})\right]}=\displaystyle\sum_{{\scriptsize\begin{array}{c}p\in C(1^+)\end{array}}}{(-1)^{\lambda^{1}(p)}}-\displaystyle\sum_{{\scriptsize\begin{array}{c}p\in C({1}^-)\end{array}}}{(-1)^{\lambda^{1}(p)}}.$$}

In the case that $n$ is odd, it remains to examine the sum $$\chi(\overline{A_{n-2}^+(f)})-\chi(\overline{A_{n-2}^-(f)})+\chi(\overline{A_n^+(f)})-\chi(\overline{A_n^-(f)}).$$ To do this, let us analyse the Morse index $\overline{\lambda}^{n-1}(p)$ of $L_a\circ f|_{\overline{A_{n-1}(f)}}$ at a point $p\in A_n(f)$. Suppose that $p\in A_n(f)$, then there exist local coordinates around $p$ and $f(p)$ such that
\begin{equation}\label{lambdadoan}
\begin{array}{l}
 y_i\circ f=x_i \text{, for } i\leq n-1,\\
 y_n\circ f=x_n^{n+1}+\displaystyle\sum_{i=1}^{n-1}x_ix_n^{n-i}+ x_{n+1}^2+\ldots+ x_{n+\lambda-1}^2-x_{n+\lambda}^2-\ldots-x_m^2
\end{array}
\end{equation} We set $\gamma:=y_n\circ f$, then around $p$ we can describe:
\begin{equation}\label{defaks}\setlength{\arraycolsep}{0.1cm}{\renewcommand{\arraystretch}{2.2}{\begin{array}{lll}
A_k(f)&=&\left\{x_{n+1}=\ldots=x_m=0;\ffrac{\partial^{j} \gamma}{\partial x_n^j}=0; j=1,\ldots,k;\ffrac{\partial^{k+1} \gamma}{\partial x_n^{k+1}}\neq0\right\};\\
\overline{A_k(f)}&=&\left\{x_{n+1}=\ldots=x_m=0;\ffrac{\partial^{j} \gamma}{\partial x_n^j}=0; j=1,\ldots,k\right\}.
\end{array}}}
\end{equation} In particular, we have that $A_n(f)=\{x_1=\ldots =x_m=0\}.$ Since $p\in A_n(f)$, from Lemma \ref{Laproperties}, $p$ is a non-degenerate critical point of $L_a\circ f|_{\overline{A_{n-1}(f)}}$ and $p\notin C(L_a\circ f|_{\overline{A_{n-2}(f)}})$. Thus, $p$ is a correct critical point of $L_a\circ f|_{\overline{A_{n-2}(f)}}$ and there exists $\xi(p)\in\R\setminus\{0\}$ such that \begin{equation*}\nabla(L_a\circ f|_{\overline{A_{n-2}(f)}})(p)=\xi(p).\nabla\left(\ffrac{\partial^{n-1} \gamma}{\partial x_n^{n-1}}\right)(p).\end{equation*} Since $\ffrac{\partial^{n-1} \gamma}{\partial x_n^{n-1}}(x)=\ffrac{(n+1)!}{2}x_n^2+(n-1)!x_1$ and $x_n=0$ at $p$, we have that $$\nabla(L_a\circ f|_{\overline{A_{n-2}(f)}})(p)=\left(\xi(p)(n-1)!,0,\ldots,0\right).$$ Hence, similarly to the sign of $\eta(p)$, to analyse the sign of $\xi(p)$ it is enough to calculate $\ffrac{\partial(L_a\circ f)}{\partial x_1}(p)$ restricted to $\overline{A_{n-2}{(f)}}$. By (\ref{defaks}), on $\overline{A_{n-2}{(f)}}$, around $p$, we have \begin{equation}\label{defxrs}\ffrac{\partial^{j} \gamma}{\partial x_n^j}=0\Rightarrow x_{n-j}=c_n^{n-j}x_n^{n-j+1}+c_{1,n}^{n-j}x_1x_n^{n-j-1}, \ j=1,\ldots,n-2,\end{equation}
so that, on $\overline{A_{n-2}(f)}$, we can write
\begin{equation}\label{defxrsii} x_{r}=c_n^{r}x_n^{r+1}+c_{1,n}^{r}x_1x_n^{r-1}, \ r=2,\ldots,n-1;\end{equation}
where $c_n^{r},c_{1,n}^{r}\in\R$ are nonzero constants obtained by (\ref{defxrs}). In this case, the map $g(x)=(g_1(x), \ldots, g_n(x))$ defined by $g_j(x)=y_j\circ f$ has its coordinate functions given by
\begin{equation*}g_j(x)=\left\{\begin{array}{ll}
x_1, & j=1;\\
c_n^{j}x_n^{j+1}+c_{1,n}^{j}x_1x_n^{j-1}, & j=2,\ldots,n;\\
\end{array}\right.\end{equation*} where $c_n^n,c_{1,n}^n\in\R$ are nonzero constants obtained by replacing the relations (\ref{defxrsii}) in the expression of $y_n\circ f$. Then, we use again expression (\ref{partial1}) to verify that $$\ffrac{\partial(L_a\circ f)}{\partial x_1}(p)=\sum_{i=1}^{n}{a_i\ffrac{\partial \psi_i}{\partial y_1}(g(p))}.$$


We know by (\ref{defaks}) that around $p$ on $\overline{A_{n-1}(f)}$ we have $x_{n+1}=\ldots=x_{m}=0$ and $\ffrac{\partial^{j} \gamma}{\partial x_n^j}=0$, for $ j=1,\ldots,n-1$. Then the variables $x_1,\ldots,x_{n-1}$  can be expressed in terms of the variable $x_n$. Indeed, from (\ref{defxrs}), we obtain 
\begin{equation}\label{defxrsan1} x_{r}=c_n^{r}x_n^{r+1}+c_{1,n}^{r}x_1x_n^{r-1}, \ r=2,\ldots,n-1;
\end{equation} moreover, $\ffrac{\partial^{n-1} \gamma}{\partial x_n^{n-1}}=0\Rightarrow x_1=c_n^1x_n^2$, where $c^1_n=-\ffrac{(n+1)!}{2!(n-1)!}$. Then, 
\begin{equation}\label{deffinalxrs} 
x_{r}=C_rx_n^{r+1}, \ r=2,\ldots,n-1,
\end{equation} where $C_r\in\R$ are non-zero constants obtained by replacing $x_1=c_n^1x_n^2$ in (\ref{defxrsan1}). Therefore, on $\overline{A_{n-1}(f)}$ the coordinate functions of $g(x)=(g_1(x),\ldots,g_n(x))$ are given by
\begin{equation}\label{gsobreAn1} g_j(x)=C_jx_n^{j+1}, \, j=1,\ldots,n
\end{equation} where $C_1=c_n^1$ and $C_n\in\R$ is the non-zero constant obtained by replacing $x_1=c_n^1x_n^2$ and (\ref{deffinalxrs}) in the expression of $y_n\circ f$.

Recall the notation $L_a\circ f=\displaystyle\sum_{i=1}^{n}{a_i\left(\psi_i\circ g\right)(x)}$ and consider its partial derivatives of order two evaluated at $p\in A_{n}(f)$, by relations (\ref{gsobreAn1}) and their partial derivatives, and since $x_n=0$ at $p\in A_{n}(f)$ it is not difficult to see that 
$$\ffrac{\partial^{(2)} (L_a\circ f)}{\partial x_n^2}(p)=2C_1\displaystyle\sum_{i=1}^{n}a_i\ffrac{\partial \psi_i}{\partial y_1}(g(p))$$ where $C_1=c_n^1<0$. Therefore,
$$\setlength{\arraycolsep}{0.1cm}{\begin{array}{lll}
\s Hess(L_a\circ f|_{\overline{A_{n-1}(f)}})(p)&=&-\s\ffrac{\partial L_a\circ f}{\partial x_1}(p)=-\s\xi(p),\\
\end{array}}
$$ that is, $$\s\xi(p)=-(-1)^{\overline{\lambda}^{n-1}(p)}.$$ 

By the characterization of $A_{n}^+(f)$ and $A_{n}^-(f)$ from \cite[p.186]{Dutertrefukui} and by the characterization of $A_{n-2}^+(f)$ and $A_{n-2}^-(f)$ from \cite[Proposition 6.1, p.188]{Dutertrefukui}, we have: If $p\in A_{n}^+(f)$, then $\lambda$ in the expression (\ref{lambdadoan}) is even and we say that
\begin{equation*}\begin{array}{lll}\nabla(L_a\circ f|_{\overline{A_{n-2}(f)}})(p) \text{ is pointing inwards } A_{n-2}^{+}(f)&\Leftrightarrow& \xi(p)>0;\\
\nabla(L_a\circ f|_{\overline{A_{n-2}(f)}})(p) \text{ is pointing inwards } A_{n-2}^{-}(f)&\Leftrightarrow& \xi(p)<0.
\end{array}\end{equation*}
If $p\in A_{n}^-(f)$, then $\lambda$ in the expression (\ref{lambdadoan}) is odd and we say that
\begin{equation*}\begin{array}{lll}\nabla(L_a\circ f|_{\overline{A_{n-2}(f)}})(p) \text{ is pointing inwards } A_{n-2}^{+}(f)&\Leftrightarrow& \xi(p)<0;\\
\nabla(L_a\circ f|_{\overline{A_{n-2}(f)}})(p) \text{ is pointing inwards } A_{n-2}^{-}(f)&\Leftrightarrow& \xi(p)>0.
\end{array}\end{equation*}
Thus, if $p\in A_{n}^+(f)$, then $\lambda$ is even in the expression (\ref{lambdadoan}) and 

\begin{equation*}\begin{array}{lll}\nabla(L_a\circ f|_{\overline{A_{n-2}(f)}})(p) \text{ is pointing inwards } A_{n-2}^{+}(f)&\Leftrightarrow& \ \xi(p)>0\\
	&\Leftrightarrow & \ \overline{\lambda}^{n-1}(p)\equiv 1\mod 2\\
	\\
\nabla(L_a\circ f|_{\overline{A_{n-2}(f)}})(p) \text{ is pointing inwards } A_{n-2}^{-}(f)&\Leftrightarrow& \ \xi(p)<0\\
		&\Leftrightarrow & \ \overline{\lambda}^{n-1}(p)\equiv 0\mod 2
\end{array}\end{equation*} If $p\in A_{n}^-(f)$, then $\lambda$ in the expression (\ref{lambdadoan}) is odd and	
	\begin{equation*}\begin{array}{lll}\nabla(L_a\circ f|_{\overline{A_{n-2}(f)}})(p) \text{ is pointing inwards } A_{n-2}^{+}(f)& \Leftrightarrow& \ \xi(p)<0\\
	&\Leftrightarrow & \ \overline{\lambda}^{n-1}(p)\equiv 0\mod 2\\
\\
\nabla(L_a\circ f|_{\overline{A_{n-2}(f)}})(p) \text{ is pointing inwards } A_{n-2}^{-}(f)&\Leftrightarrow& \ \xi(p)>0\\
	&\Leftrightarrow & \ \overline{\lambda}^{n-1}(p)\equiv 1\mod 2
\end{array}\end{equation*}

Using these equivalences and keeping the notations (\ref{notacaogradientes}), we obtain:

\begin{equation*}{\small\setlength{\arraycolsep}{0.1cm}{\begin{array}{cccc}
&\displaystyle\sum_{\scriptsize\begin{array}{c}p\in C(\overline{{n-1}})\cap A_{n}(f),\\ \nabla(\overline{n-2})(p)^{\vee}A_{n-2}^+(f)\end{array}}{(-1)^{\overline{\lambda}^{n-1}(p)}}&-&\displaystyle\sum_{\scriptsize\begin{array}{c}p\in C(\overline{{n-1}})\cap A_{n}(f),\\ \nabla(\overline{n-2})(p)^{\vee}A_{n-2}^-(f)\end{array}}{(-1)^{\overline{\lambda}^{n-1}(p)}}\\
\\
=&\displaystyle\sum_{\scriptsize\begin{array}{c}p\in C(\overline{{n-1}})\cap A_{n}^+(f),\\ \nabla(\overline{n-2})(p)^{\vee}A_{n-2}^+(f)\end{array}}{(-1)^{\overline{\lambda}^{n-1}(p)}}&+&\displaystyle\sum_{\scriptsize\begin{array}{c}p\in C(\overline{{n-1}})\cap A_{n}^-(f),\\ \nabla(\overline{n-2})(p)^{\vee}A_{n-2}^+(f)\end{array}}{(-1)^{\overline{\lambda}^{n-1}(p)}}\\
\end{array}}}
\end{equation*}

\begin{equation*}{\small\setlength{\arraycolsep}{0.1cm}{\begin{array}{cccc}
-&\displaystyle\sum_{\scriptsize\begin{array}{c}p\in C(\overline{{n-1}})\cap A_{n}^+(f),\\ \nabla(\overline{n-2})(p)^{\vee} A_{n-2}^-(f)\end{array}}{(-1)^{\overline{\lambda}^{n-1}(p)}}&-&
\displaystyle\sum_{\scriptsize\begin{array}{c}p\in C(\overline{{n-1}})\cap A_{n}^-(f),\\ \nabla(\overline{n-2})(p)^{\vee} A_{n-2}^-(f)\end{array}}{(-1)^{\overline{\lambda}^{n-1}(p)}}\\
\\
=&\displaystyle\sum_{\scriptsize\begin{array}{c}p\in C(\overline{{n-1}})\cap A_{n}^+(f),\\ \nabla(\overline{n-2})(p)^{\vee}A_{n-2}^+(f)\end{array}}{(-1)}&+&\displaystyle\sum_{\scriptsize\begin{array}{c}p\in C(\overline{{n-1}})\cap A_{n}^-(f),\\ \nabla(\overline{n-2})(p)^{\vee}A_{n-2}^+(f)\end{array}}{(\, 1\,)}\\
\\
-&\displaystyle\sum_{\scriptsize\begin{array}{c}p\in C(\overline{{n-1}})\cap A_{n}^+(f),\\ \nabla(\overline{n-2})(p)^{\vee} A_{n-2}^-(f)\end{array}}{(\, 1\,)}&-&
\displaystyle\sum_{\scriptsize\begin{array}{c}p\in C(\overline{{n-1}})\cap A_{n}^-(f),\\ \nabla(\overline{n-2})(p)^{\vee} A_{n-2}^-(f)\end{array}}{(-1)}\\
\\
=&\displaystyle\sum_{\scriptsize\begin{array}{c}p\in C(\overline{{n-1}})\cap A_{n}^+(f),\\ \nabla(\overline{n-2})(p)^{\vee}A_{n-2}^+(f)\end{array}}{(-1)}&-&\displaystyle\sum_{\scriptsize\begin{array}{c}p\in C(\overline{{n-1}})\cap A_{n}^-(f),\\ \nabla(\overline{n-2})(p)^{\vee}A_{n-2}^+(f)\end{array}}{(-1)}\\
\\
+&\displaystyle\sum_{\scriptsize\begin{array}{c}p\in C(\overline{{n-1}})\cap A_{n}^+(f),\\ \nabla(\overline{n-2})(p)^{\vee} A_{n-2}^-(f)\end{array}}{(-1)}&-&
\displaystyle\sum_{\scriptsize\begin{array}{c}p\in C(\overline{{n-1}})\cap A_{n}^-(f),\\ \nabla(\overline{n-2})(p)^{\vee} A_{n-2}^-(f)\end{array}}{(-1)}\\
\\
=&-\#A_n^+(f)+\#A_n^-(f).&&
\end{array}}}
\end{equation*} Then, the sum $\chi(\overline{A_{n-2}^+(f)})-\chi(\overline{A_{n-2}^-(f)})$ is equal to 

\small $$\displaystyle\sum_{p\in C(n-2^+)}{(-1)^{\lambda^{n-2}(p)}}-\displaystyle\sum_{\tiny p\in C({n-2}^-)}(-1)^{\lambda^{n-2}(p)}-\#A_n^+(f)+\#A_n^-(f).$$ \normalsize Moreover, $$\chi(\overline{A_n^+(f)})-\chi(\overline{A_n^-(f)})=\#A_n^+(f)-\#A_n^-(f).$$ Hence, in the case that $n$ is odd we also have that

{\small$$\displaystyle\sum_{k: \text{ odd}}{\left[\chi(\overline{A_k^+(f)})-\chi(\overline{A_k^-(f)})\right]}=\displaystyle\sum_{{\scriptsize\begin{array}{c}p\in C(1^+)\end{array}}}{(-1)^{\lambda^{1}(p)}}-\displaystyle\sum_{{\scriptsize\begin{array}{c}p\in C({1}^-)\end{array}}}{(-1)^{\lambda^{1}(p)}}.$$}

Finally, by equation (\ref{ladoesquerdo}) we have {\small\begin{equation*}\renewcommand{\arraystretch}{2.3}{\setlength{\arraycolsep}{0.1cm}{
\begin{array}{lllll}
\chi(M)&=&\displaystyle\sum_{p\in A_1^+(f)\cap C}(-1)^{\lambda^1(p)}&-&\displaystyle\sum_{p\in A_1^-(f)\cap C}(-1)^{\lambda^1(p)}\\
&=&\displaystyle\sum_{{\scriptsize\begin{array}{c}p\in C(1^+)\end{array}}}{(-1)^{\lambda^{1}(p)}}&-&\displaystyle\sum_{{\scriptsize\begin{array}{c}p\in C({1}^-)\end{array}}}{(-1)^{\lambda^{1}(p)}}.
\end{array}}}
\end{equation*}} Therefore, $\chi(M)=\displaystyle\sum_{k: \text{ odd}}{\left[\chi(\overline{A_k^+(f)})-\chi(\overline{A_k^-(f)})\right]}.$ \end{proof}

\bibliographystyle{plain}
\bibliography{refbiblioNewproof}

\begin{thebibliography}{1}

\bibitem{Bott}
Raoul Bott.
\newblock Nondegenerate critical manifolds.
\newblock {\em Ann. of Math. (2)}, 60:248--261, 1954.

\bibitem{Dutertrefukui}
Nicolas Dutertre and Toshizumi Fukui.
\newblock On the topology of stable maps.
\newblock {\em J. Math. Soc. Japan}, 66(1):161--203, 2014.

\bibitem{Fukuda}
Takuo Fukuda.
\newblock Topology of folds, cusps and {M}orin singularities.
\newblock In {\em A f\^ete of topology}, pages 331--353. Academic Press,
  Boston, MA, 1988.

\bibitem{HammLe}
Helmut~A. Hamm and L{\^e}~Dung Tr{\'a}ng.
\newblock Un th\'eor\`eme de {Z}ariski du type de {L}efschetz.
\newblock {\em Ann. Sci. \'Ecole Norm. Sup. (4)}, 6:317--355, 1973.

\bibitem{Morin2}
Bernard Morin.
\newblock Formes canoniques des singularit\'{e}s d'une application
  diff\'{e}rentiable.
\newblock {\em Comptes Rendus Hebdomadaires des s\'{e}ances de l'Acad\'{e}mie
  des Sciences}, 260(25):6503--6506, 1965.

\bibitem{Ruiz}
Camila~M. Ruiz.
\newblock {\em Sobre a topologia das singularidades de Morin}.
\newblock PhD thesis, Universidade de S\~{a}o Paulo - Instituto de Ci\^{e}ncias
  Matem\'{a}ticas e de Computa\c{c}\~{a}o, S\~{a}o Carlos, Brasil, 2015.

\bibitem{Saeki}
Osamu Saeki.
\newblock Studying the topology of {M}orin singularities from a global
  viewpoint.
\newblock {\em Math. Proc. Cambridge Philos. Soc.}, 117(2):223--235, 1995.

\bibitem{Viro}
O.~Ya. Viro.
\newblock Some integral calculus based on {E}uler characteristic.
\newblock In {\em Topology and geometry---{R}ohlin {S}eminar}, volume 1346 of
  {\em Lecture Notes in Math.}, pages 127--138. Springer, Berlin, 1988.

\end{thebibliography}

\end{document}